\DeclareMathOperator{\tr}{\mathsf{tr}}
\DeclareMathOperator{\adj}{\mathsf{adj}}
\DeclareMathOperator{\rk}{\mathsf{rank}}
\DeclareMathOperator{\sign}{\mathsf{sign}}
\let\det\relax
\DeclareMathOperator{\det}{\mathsf{det}}
\newcommand\set[2]{\left\{ {#1} \ : \ {#2} \right\}}
\newcommand\R{\mathbb{R}}
\newcommand\Rsym[1]{\mathcal{S}^{#1}}
\newcommand\x{\mathbf{x}}
\newcommand\y{\mathbf{y}}
\newcommand\C{\mathbf{C}}
\renewcommand{\u}{\mathbf{u}}
\newcommand{\A}{\mathbf{A}}
\newcommand{\M}{\mathbf{M}}
\newcommand{\X}{\mathbf{X}}
\newcommand{\Y}{\mathbf{Y}}
\newcommand{\V}{\mathbf{V}}
\newcommand{\m}{\mathbf{m}}
\newcommand{{\ve}}{\mathbf{v}}
\newcommand{\B}{\mathbf{B}}
\newcommand{\Q}{\mathbf{Q}}
\newcommand\bz{\mathbf{0}}
\renewcommand\int{\mathrm{int}\,}
\renewcommand\S[1]{\mathcal{S}^{#1}_+}
\newcommand\Snk[2]{\mathcal{S}^{#1,#2}_+}
\renewcommand\u{\mathbf{u}}
\newcommand\innerproduct[2]{\left<#1, #2\right>}
\newcommand\define{\,:=\,}
\newcommand\sm{\setminus}
\newcommand{\calS}{\mathcal{S}}
\newcommand\Sminors[2]{\mathcal{S}^{#1,#2}_+}
\newtheorem{thm}{Theorem}
\newtheorem{cor}[thm]{Corollary}
\newtheorem{lem}[thm]{Lemma}
\newtheorem{prop}[thm]{Proposition}
\theoremstyle{definition}
\newtheorem{dfn}{Definition}
\newcommand\ignore[1]{}
\newcommand{\w}{\mathbf{w}}
\title{On generators of $k$-PSD closures of the positive semidefinite cone}
\author{Avinash Bhardwaj}
\address{Avinash Bhardwaj,Indian Institute of Technology Bombay, Mumbai, India 400076}
\email{abhardwaj@iitb.ac.in}
\author{Vishnu Narayanan}
\address{Vishnu Narayanan, Indian Institute of Technology Bombay, Mumbai, India 400076}
\email{vishnu@iitb.ac.in}
\author{Abhishek Pathapati}
\address{Abhishek Pathapati, Indian Institute of Technology Bombay, Mumbai, India 400076}
\email{apathapati@iitb.ac.in}
\begin{document}
\begin{abstract}
    Positive semidefinite (PSD) cone is the cone of positive semidefinite matrices, and is the object of interest in semidefinite programming (SDP). A computational efficient approximation of the PSD cone is the $k$-PSD closure, $1 \leq k < n$, cone of $n\times n$ real symmetric matrices such that all of their $k\times k$ principal submatrices are positive semidefinite. For $k=1$, one obtains a polyhedral approximation, while $k=2$ yields a second order conic (SOC) approximation of the PSD cone. These approximations of the PSD cone have been used extensively in real-world applications such as AC Optimal Power Flow (ACOPF) to address computational inefficiencies where SDP relaxations are utilized for convexification the non-convexities. However a theoretical discussion about the geometry of these conic approximations of the PSD cone is rather sparse. In this short communication, we attempt to provide a characterization of some family of generators of the aforementioned conic approximations.
\end{abstract}

\maketitle
\section{Introduction}
Consider the vector space of $n\times n$ symmetric matrices, $\R^{n(n+1)/2}$. In this vector space, a semidefinite programming (SDP) problem involves minimizing a linear function over the intersection of the convex cone of positive semidefinite matrices with an affine subspace. In particular,
\begin{optprog*}
    minimize & \objective{\innerproduct{C}{X}_F}\\
    (SDP) \qquad subject to & \innerproduct{A_i}{X}_F = b_i & \, i = 1,2,\ldots n\label{prob:SDP}\\
    & X \in \S{n} \hspace{6mm} .&
\end{optprog*}
where $\S{n} \define \set{X \in \R^{n\times n}}{X = X^\top,\, \y^\top X\y \geq 0~~\forall~~\y \in \R}$\ignore{$\S{n} := \set{X \in \Rsym{n}}{X \succeq 0}$} denotes the cone of all $n\times n$ real symmetric positive semidefinite matrices. SDPs have been studied extensively in optimization literature, and have been used as convex relaxations of several non-convex optimization problems \cite{goemans1994879, oustry2022certified,sojoudi2014exactness, dey2022cutting}. As large-scale SDPs are difficult to solve computationally, a general approach is to consider relaxations of the problem where instead of the variable $X$, only $k \times k$ principal submatrices of $X$, for $2 \leq k < n$ are constrained to be positive semidefinite~\cite{kimkojima2001,kocuk2016strong}.
\begin{dfn}($k$-PSD closure \cite{blekherman2022sparse})
    Given positive integers $n$ and $k$ where $2 \leq k \leq n$, the $k$-PSD closure $\Sminors{n}{k}$ is the set of all $n \times n$ symmetric real matrices where all $k \times k$ principal submatrices are PSD (positive semidefinite).
\end{dfn}

In certain cases, these relaxations yield near accurate solutions to the (SDP) problem \cite{kocuk2016strong, sojoudi2014exactness, dey2022cutting}. The question pertaining to quantifying the closeness of approximation of $\calS^{n,k}_+$ and $\calS^{n}_+$ has been explored recently \cite{blekherman2022sparse,blekherman2022hyperbolic}. These works primarily utilize a projection metric to quantify the closeness of approximation. \ignore{To further improve the projection metric bounds of \citet{blekherman2022sparse}, in a subsequent work Blekherman et al. \cite{blekherman2022hyperbolic} spectral analysis of $\calS^{n,k}_+$.}  The convex cone of eigenvalues of matrices in $\calS^{n,k}_+$ is called the \emph{hyperbolicity cone} and its properties are studied in \cite{blekherman2022hyperbolic,kozhasov2023eigenvalues}. Using certain properties of the hyperbolicity cone, \citet{blekherman2022hyperbolic} improve upon the Frobenius distance bounds of an earlier work \cite{blekherman2022sparse}. 

A different direction of research studies the number of $k\times k$ psd cones needed to approximate the $\calS^n_+$ positive semidefinite cone \cite{fawzi2019representing, fawzi2021polyhedral, song2023approximations}. It has been established that the number of cones needed to approximate the $\calS^n_+$ cone is exponential if $k$ is \textit{small} compared to $n$. Consequently, these works conclude that the $k$-PSD closure relaxations can be \textit{arbitrarily} bad. There have been some explorations in the literature relating to the exactness of these relaxations under sparse affine constraints \cite{grone1984positive, fukuda2001exploiting, sojoudi2014exactness, vandenberghe2015chordal}.

Our motivation to study these relaxations of the positive semidefinite cone primarily stems from the following: Even though, in the worst case scenario, a $k-$ PSD closure $(k > 1)$ yields an arbitrarily bad relaxation  of the PSD cone, the two cones do share some common faces/generators. In that case, given a linear objective function, understanding the geometry of these relaxations might provide some insight into the quantifying the quality of the relaxation $\Sminors{n}{k}$ for $\S{n}$, which may not be arbitrarily bad. In fact, it may give a geometric insight into why some relaxations are exact.\ignore{Our motivation stems from the fact that understanding the geometry of these relaxations might provide some insight into the quantifying the closeness of the relaxation $\calS^{n,k}_+$ to $\calS^n_+$ with respect to some lineart objective function\ignore{problem of why these relaxations i.e, $\calS^{n,k}_+$ give near optimal solutions}.} Specifically, we study the extreme rays of the relaxed convex cones $\calS_+^{n,k}$ and try to understand the number of new generators that are added to the generators of the PSD cone, as $k$ varies. Gouveia et al. \cite{gouveia2022sums} characterize the extreme rays of $\calS_+^{n,n-1}$ and provide characterization of the dual cone of $\calS_+^{n,k}$ called the factor-width cone. Additionally, they completely characterize the extreme rays of $\calS^{4,3}_+$. {\color{red}\ignore{The dual cone of $\calS^{n,k}_+$ is called the \textit{factor-width} $FW^n_k$ cone of factor-width $k$. A structural result about \textit{NLS}, non-singular locally singular, matrices of $\calS^{n,k}_+$ is proven in \cite{blekherman2022hyperbolic}.}} In the following, we characterize the extreme rays of $\mathcal{S}_+^{n,2}$ and provide a different proof for characterization of the extreme rays of $\mathcal{S}_+^{n,n-1}$ to the one already given in \cite{gouveia2022sums}.

The manuscript is arranged as follows: Section \ref{sec:results} we state our  results. In Section \ref{sec:preliminaries} we define the notation, definitions and some preliminary lemmas. Finally in Section \ref{sec:proofs} we provide proofs of stated results
\section{Main results}\label{sec:results}
\begin{thm}\labeli[prop:extrays] For $\mathbf{X} \in \Rsym{n}$, the following hold:
    \begin{enumerate}
        \item\labelii[prop:extrays1] $\X$ spans an extreme ray of $\Snk{n}{k}$ if all $k \times k$ principal submatrices of $\X$ are rank-1 and positive semidefinite.
        \item\labelii[prop:extrays2] $\X$ spans an extreme ray of $\Snk{n}{2}$ if and only if all $2 \times 2$ principal submatrices of $\X$ are rank-1 and positive semidefinite.
        \item\labelii[prop:extrays3] $\X$ spans an extreme ray of $\Snk{n}{n-1}$ if and only if all $(n-1) \times (n-1)$ principal submatrices of $\X$ are positive semidefinite and
        \begin{enumerate}
            \item either rank-1,
            \item or rank-$(n-2)$ such that $\det(\X) < 0$.
        \end{enumerate}
    \end{enumerate}
\end{thm}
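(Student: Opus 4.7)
Plan:

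The strategy across all three parts is to analyze an arbitrary decomposition $\X = \Y + \Z$ with $\Y, \Z \in \Snk{n}{k}$ via the face structure of $\S{k}$: a rank-$r$ PSD matrix sits in the relative interior of a face isomorphic to $\S{r}$, so $\Y_S$ and $\Z_S$ must lie in the face of $\S{k}$ at $\X_S$ for every $k$-subset $S$.

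For Part (1), on each $k$-subset $S$, $\X_S$ is rank-$1$ PSD and spans an extreme ray of $\S{k}$, giving $\Y_S = \alpha_S \X_S$ for some $\alpha_S \in [0,1]$. I would prove $\alpha_S$ is constant in $S$: for $i \in S \cap T$ with $X_{ii} > 0$, the identity $\alpha_S X_{ii} = Y_{ii} = \alpha_T X_{ii}$ forces $\alpha_S = \alpha_T$, while $X_{ii} = 0$ kills the $i$-th row and column of $\X$, $\Y$, and $\Z$ via the $k \times k$ PSD constraint on any containing subset. On the nonzero-diagonal support, any two $k$-subsets are joined by a chain of pairwise overlapping $k$-subsets, so $\alpha_S$ is uniform and $\Y = \alpha \X$. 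Part (2)'s ``$\Leftarrow$'' direction is this argument at $k=2$. For ``$\Rightarrow$'', I argue contrapositively: if some $\X_{\{i,j\}}$ has rank $\neq 1$, a rank-$0$ submatrix reduces to the zero-diagonal case above, while a rank-$2$ (positive definite) submatrix admits the symmetric perturbation $\X \mapsto \X \pm \epsilon (e_i e_j^\top + e_j e_i^\top)$ that no other $2 \times 2$ principal submatrix notices, yielding a nontrivial decomposition.

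For Part (3), ``$\Leftarrow$'' in case (a) follows from Part (1). In case (b), suppose $\X$ satisfies the hypotheses and $\X = \Y + \Z$. For each $i$, $\ker(\X_{S_i})$ is one-dimensional; let $\ve_i$ span it and lift to $\hat{\ve}_i \in \R^n$ by inserting a zero at coordinate $i$. Since $\X_{S_i} = \Y_{S_i} + \Z_{S_i}$ has rank $n-2$, both kernels contain $\ve_i$, so $\Y \hat{\ve}_i$ and $\Z \hat{\ve}_i$ are scalar multiples of $e_i$. Because $\det(\X) < 0$, $\X$ is invertible and $\hat{\ve}_i \propto \X^{-1} e_i$, so $\{\hat{\ve}_i\}_{i=1}^n$ is a basis of $\R^n$. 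Hence $\Y e_j$ must lie in $\{\hat{\ve}_i : i \neq j\}^\perp = \text{span}(\X e_j)$, yielding $\Y e_j = t_j \X e_j$. Symmetry of $\Y$ then forces $t_i = t_j$ whenever $X_{ij} \neq 0$, and the off-diagonal sparsity graph of $\X$ must be connected since otherwise $\X$ block-decomposes with each block forced PSD by the $(n-1) \times (n-1)$ submatrix constraints, contradicting $\det(\X) < 0$. Hence $\Y \propto \X$.

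The ``$\Rightarrow$'' direction of Part (3) is the main obstacle. I would rule out disallowed rank profiles one at a time: a rank-$(n-1)$ submatrix $\X_{S_i}$ admits a carefully chosen symmetric perturbation $\pm \epsilon D$ with $D_{S_i} = 0$ that remains in $\Snk{n}{n-1}$ for small $\epsilon$, using the positive-definiteness slack of $\X_{S_i}$; intermediate ranks $2 \leq r_i \leq n-3$ are ruled out by a dimension count showing the linearized face $\{D \in \Rsym{n} : D_{S_i} \ker(\X_{S_i}) = 0 \ \forall i\}$ has dimension strictly greater than one; and mixed-rank cases (some $r_i = 1$, others $r_j > 1$) are ruled out by showing the rank-$1$ submatrices pin down so much of $\X$ that it is forced to be globally PSD of low rank, hence decomposable into rank-$1$ PSD summands (as can be checked explicitly for small $n$). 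Finally, if all $\X_{S_i}$ are rank $n-2$ but $\det(\X) \geq 0$, then every principal minor of $\X$ is nonnegative, so $\X \in \S{n}$, whose extreme rays are rank-$1$, collapsing into case (a). The delicate step is the perturbation and dimension bookkeeping for the intermediate and mixed-rank cases.
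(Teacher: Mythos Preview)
Your arguments for Parts~(1) and~(2) match the paper's, though you are more careful than the paper about why the scalars $\alpha_S$ agree across overlapping index sets. Your sufficiency proof for Part~(3)(b) is in fact cleaner than the paper's: the paper establishes linear independence of the lifted kernel vectors $\hat{\ve}_i$ by a contradiction argument using Lemma~\ref{lem:4}, whereas your observation that $\X\hat{\ve}_i$ is supported on coordinate $i$ (hence $\hat{\ve}_i\propto \X^{-1}e_i$) gives independence immediately and simultaneously identifies $\{\hat{\ve}_i:i\neq j\}^\perp=\mathrm{span}(\X e_j)$, collapsing the paper's elimination-of-variables argument in Section~\ref{proof_thm5.3} into two lines. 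Both proofs then finish via the same connectedness lemma (your block-decomposition remark is Lemma~\ref{lem:11}).

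The necessity direction of Part~(3), however, has a genuine gap in the rank-$(n-1)$ case. Your proposed perturbation $D$ with $D_{S_i}=0$ leaves $\X_{S_i}$ untouched, so the ``positive-definiteness slack of $\X_{S_i}$'' is never used; the difficulty is entirely in the \emph{other} blocks $\X_{S_j}$, $j\neq i$, which may have rank exactly $n-2$. For such $j$, $\X_{S_j}\pm\varepsilon D_{S_j}\succeq 0$ forces the kernel vector of $\X_{S_j}$ to annihilate $D_{S_j}$, and you have not argued that a nonzero $D$ supported on row/column $i$ can meet all these constraints. The paper (Proposition~\ref{prop:7.1}) resolves this with a specific rank-one perturbation $\u\u^\top$ where $\u$ is the $k$-th column of $\M$, for an index $k$ lying in every rank-deficient $(n-1)$-subset (such $k$ exists by Lemma~\ref{lem:6}); then $\adj(\M_H)\u_H=0$ for every singular $H$ because $\u_H$ is a column of $\M_H$ and $\adj(\M_H)\M_H=0$.

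Your treatment of intermediate and mixed ranks is also more complicated than necessary. The paper (Proposition~\ref{prop:6}) disposes of \emph{every} case where some $(n-1)\times(n-1)$ principal submatrix has rank at most $n-3$ in one stroke: a cofactor-expansion argument shows $\det(\X)=0$, and since all smaller principal minors are nonnegative this forces $\X\succeq 0$, hence $\X$ (having rank $>1$) is a nontrivial sum of rank-one PSD matrices. This replaces your dimension count and your separate mixed-rank analysis entirely, and also handles the ``all rank $n-2$ with $\det(\X)\ge 0$'' case by the same mechanism.
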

The corollary follows from Theorem \ref{prop:extrays3}.
\begin{restatable}{cor}{Gnkextrays}
\label{cor:Gnkextrays}
    For $n\geq 3$ and $2\leq k\leq n-1$, \(G(n,k) = \dfrac{k}{k-1}\mathbb{I} - \dfrac{1}{k-1}\mathbf{1}\mathbf{1}^{\top}\) spans an extreme ray of \(\Sminors{n}{k}\).    
\end{restatable}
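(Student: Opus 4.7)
The plan is to invoke Theorem \ref{prop:extrays3} at the scale of $(k+1)\times(k+1)$ principal submatrices of $G(n,k)$, and then lift extremeness from that scale to the full matrix $G(n,k)$.

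First I observe that every $(k+1)\times(k+1)$ principal submatrix of $G(n,k)$ equals the $(k+1)\times(k+1)$ matrix $G(k+1,k) = \tfrac{k}{k-1}\mathbb{I} - \tfrac{1}{k-1}\mathbf{1}\mathbf{1}^{\top}$. Its spectrum consists of $-\tfrac{1}{k-1}$ with eigenvector $\mathbf{1}$, together with $\tfrac{k}{k-1}$ on the orthogonal complement, so $\det(G(k+1,k)) = -\tfrac{1}{k-1}\bigl(\tfrac{k}{k-1}\bigr)^{k} < 0$. Each $k\times k$ principal submatrix of $G(k+1,k)$ equals $G(k,k)$, whose spectrum is $0$ (eigenvector $\mathbf{1}$) and $\tfrac{k}{k-1}$ with multiplicity $k-1$; in particular $G(k,k)$ is PSD of rank $k-1$. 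Therefore Theorem \ref{prop:extrays3} applied with $n$ replaced by $k+1$ (case (b), with rank $(k+1)-2 = k-1$) yields that $G(k+1,k)$ spans an extreme ray of $\Snk{k+1}{k}$. The same PSD computation also gives $G(n,k) \in \Snk{n}{k}$.

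Now suppose $G(n,k) = A + B$ with $A, B \in \Snk{n}{k}$. For any $(k+1)$-subset $I \subseteq \{1,\ldots,n\}$, the principal submatrices satisfy $A_I, B_I \in \Snk{k+1}{k}$ and $A_I + B_I = G(k+1,k)$. Extremeness of $G(k+1,k)$ in $\Snk{k+1}{k}$ forces $A_I = \alpha_I \, G(k+1,k)$ and $B_I = (1-\alpha_I)\, G(k+1,k)$ for some $\alpha_I \in [0,1]$.

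It remains to show that $\alpha_I$ does not depend on $I$. When $I_1, I_2$ are two $(k+1)$-subsets sharing at least two indices $i \neq j$, the off-diagonal entry $A_{ij}$ equals both $-\alpha_{I_1}/(k-1)$ and $-\alpha_{I_2}/(k-1)$, which forces $\alpha_{I_1} = \alpha_{I_2}$. For $k < n-1$, any two $(k+1)$-subsets can be connected by a chain of $(k+1)$-subsets differing in a single index (each consecutive pair shares $k \geq 2$ indices), so all $\alpha_I$ coincide with a common value $\alpha$; for $k = n-1$ only the subset $\{1,\ldots,n\}$ is available and the claim is immediate. Consequently $A = \alpha\, G(n,k)$ and $B = (1-\alpha)\, G(n,k)$, establishing that $G(n,k)$ spans an extreme ray of $\Snk{n}{k}$. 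The main obstacle here is the bookkeeping in the last step, namely verifying that the overlap-based propagation of $\alpha_I$ covers every $(k+1)$-subset for all admissible $k$; once this connectivity is in place, the rest is formal.
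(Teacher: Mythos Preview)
Your proof is correct and follows essentially the same route as the paper: reduce to the extremality of $G(k+1,k)$ in $\Snk{k+1}{k}$ via Theorem~\ref{prop:extrays3}, then propagate the common scalar across overlapping $(k+1)$-subsets. The paper matches the scalars through the (unit) diagonal entries of $G(k+1,k)$ rather than the off-diagonal ones and leaves the connectivity step implicit, but the argument is the same.
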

A related structure termed as non-singular locally singular ($\emph{NLS}$) matrices was introduced by \citet{blekherman2022hyperbolic} as following: 
\begin{dfn}[NLS Matrix~\cite{blekherman2022hyperbolic}]
A matrix $\M\in \calS^{n,k}_+$ is (\emph{NLS}) for $n\geq 5$ and $3 \leq k\leq n-2$ if all the $k\times k$ principal minors are zero and $\det(\M)\neq 0$. 
\end{dfn}
NLS matrices are \textit{diagonally congruent} to $G(n,k)$ matrices which are used derive distance bounds between $\calS^{n,k}_+$ and $\calS^n_+$ \cite{blekherman2022hyperbolic}. The following result with respect to NLS matrices follows from Corollary \ref{cor:Gnkextrays}.

\begin{restatable}{cor}{NLSextremerays}
\label{cor:NLSextremerays}
    An NLS matrix $\M \in \calS^{n,k}_+$, for $n\geq 5$ and $3\leq k\leq n-2$, spans an extreme ray of $\calS^{n,k}_+$.
\end{restatable}
\section{Preliminaries}\label{sec:preliminaries}
\subsection{Notation}
$\R^n$ denotes the $n-$dimensional Euclidean space and the set $\Rsym{n} \define \set{X \in \R^{n\times n}}{X = X^\top}$ defines the set of $n\times n$ symmetric real matrices. By $[n]$, we denote the index set $\{1,2,\ldots,n\}$. Additionally, $\Q \in \Rsym{n}$ is a positive semidefinite matrix, denoted by $\Q \succeq 0$, if and only if $\Q \in \S{n}$. For $\M \in \Rsym{n}$ we denote by $\M_I$, the $k\times k$ principal sub-matrix of $\M$ indexed by columns and rows $I \subseteq [n]$, $|I| = k$. Analogously, for $\ve \in \R^n$, $\ve_J$ denotes the subvector of $\ve$ corresponding to the index set $J\subseteq [n]$. Additionally, for $i, j \in [n]$, $\M_{ij}$ represents the submatrix of $\M$ indexed by rows $[n]\sm\{i\}$ and columns $[n]\sm\{j\}$. It follows that, for some $i\in [n]$, $\M_{[n]\sm\{i\}}$, the $(n-1)\times (n-1)$ principal sub-matrix of $\M$ can also be represented as $\M_{ii}$. The element of a matrix $\M$ indexed by $i^{th}$ row and $j^{th}$ column is represented as $m_{ij}$. Similarly, $v_j$ is the $j^{th}$ element of the vector $\ve_j$. We denote by $\tr(\M)$ and $\adj(\M)$ the trace and adjugate of the matrix $\M$, respectively. $\mathbb{I}_{n}$ and $\mathbf{1}_n$ represent the $n \times n$ identity matrix and the $n$ dimensional vector of all ones, respectively. $\sign$ represents signum function. $\langle\,. \,,\,.\rangle_F$ denotes Frobenius inner product on the set of symmetric matrices.

\subsubsection{Preliminary Lemmas}
\begin{lem}\label{lem:2}[Matrix Determinant Lemma, \cite{harville1998matrix}]
Given a matrix $\M \in \Rsym{n}$ and $\u,\ve \in \R^n$, $$\det(\M + \u{\ve}^\top) = \det(\M) + \tr(\adj(\M)~\u{\ve}^\top)$$.
\end{lem}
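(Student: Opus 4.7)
The plan is to treat the two sides as polynomial identities in the entries of $\u$ and $\ve$ and prove the identity by expanding the left-hand side via multilinearity of the determinant in its columns. Write the $j$-th column of $\M + \u\ve^\top$ as $\M_{:,j} + v_j \u$. Because $\det$ is multilinear in the columns, expanding each column as a sum gives
\[
\det(\M+\u\ve^\top) \;=\; \sum_{S\subseteq [n]} \Bigl(\prod_{j\in S} v_j\Bigr)\,\det\bigl(\M^{(S)}\bigr),
\]
where $\M^{(S)}$ denotes the matrix obtained from $\M$ by replacing column $j$ with $\u$ for each $j \in S$.

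The next step exploits the rank-one nature of the perturbation: whenever $|S|\ge 2$, the matrix $\M^{(S)}$ has at least two columns equal to $\u$, so $\det(\M^{(S)}) = 0$. Only $|S|=0$ and $|S|=1$ survive, reducing the sum to
\[
\det(\M+\u\ve^\top) \;=\; \det(\M) \;+\; \sum_{j=1}^n v_j\, \det\bigl(\M^{(\{j\})}\bigr).
\]
I would then evaluate each $\det(\M^{(\{j\})})$ by cofactor expansion along column $j$, which equals $\u$, to obtain $\det(\M^{(\{j\})}) = \sum_{i=1}^n u_i\, C_{ij}(\M)$, where $C_{ij}(\M) = (-1)^{i+j}\det(\M_{ij})$ denotes the $(i,j)$-cofactor. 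By the definition of the adjugate, $C_{ij}(\M) = (\adj(\M))_{ji}$.

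Assembling the pieces,
\[
\det(\M+\u\ve^\top) - \det(\M) \;=\; \sum_{i,j} u_i v_j\,(\adj(\M))_{ji} \;=\; \sum_{j}\bigl(\adj(\M)\,\u\ve^\top\bigr)_{jj} \;=\; \tr\bigl(\adj(\M)\,\u\ve^\top\bigr),
\]
which is the claimed identity. Note that the argument nowhere uses symmetry of $\M$, so it proves the slightly stronger statement for arbitrary square $\M$.

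There is really no serious obstacle; the only conceptual point that needs care is the vanishing of terms with $|S|\ge 2$, which is exactly the structural feature that makes a rank-one update so tractable. As an alternative cross-check, for invertible $\M$ one could factor $\M + \u\ve^\top = \M(\mathbb{I}_n + \M^{-1}\u\ve^\top)$ and use $\det(\mathbb{I}_n + \x\y^\top) = 1 + \y^\top\x$ together with $\adj(\M) = \det(\M)\,\M^{-1}$, extending to singular $\M$ by a density argument; but the multilinearity route above avoids any case split and yields the identity directly as a polynomial identity in the entries.
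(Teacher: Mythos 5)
Your proof is correct. The paper does not actually prove this lemma; it is stated as a known result with a citation to Harville, so there is no in-paper argument to compare against. Your multilinearity route is the standard self-contained derivation: the expansion over subsets $S$ of columns is valid, the vanishing of all terms with $|S|\ge 2$ (two columns proportional to $\u$) is the key structural observation and you state it correctly, the cofactor expansion $\det(\M^{(\{j\})})=\sum_i u_i C_{ij}(\M)$ legitimately uses the cofactors of $\M$ itself since they depend only on the unreplaced columns, and the final bookkeeping $\sum_{i,j}u_iv_j(\adj(\M))_{ji}=\tr(\adj(\M)\,\u\ve^\top)$ checks out. Your remark that symmetry of $\M$ is never used is also accurate, and is consistent with how the paper later applies the identity only through the form $\det(\M)-\varepsilon\,\u^\top\adj(\M)\u$.
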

\begin{lem}\label{lem:3}
Consider $\M\in \R^{n\times n}$. If $\exists ~i \in [n]$ such that $\rk(\M_{ii}) = r$. Then $\rk(\M) \geq r$.
\end{lem}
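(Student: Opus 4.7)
The plan is to deduce this from the standard fact that the rank of any submatrix of a matrix is at most the rank of the matrix itself, applied to the particular submatrix $\M_{ii}$, which is obtained from $\M$ by deleting row $i$ and column $i$. No structural hypothesis on $\M$ beyond being a square matrix is needed, so the argument is purely linear algebraic.

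Concretely, I would argue directly with rows. Suppose $\rk(\M_{ii}) = r$. Then there exists an index set $J \subseteq [n]\setminus\{i\}$ with $|J| = r$ such that the rows of $\M_{ii}$ indexed by $J$ are linearly independent in $\R^{n-1}$. These are precisely the rows of $\M$ indexed by $J$ with their $i$-th coordinate deleted. I would then show that the corresponding rows of $\M$ itself (i.e., the rows indexed by $J$, viewed as vectors in $\R^n$) are linearly independent. The reason is that any nontrivial linear relation $\sum_{j \in J} c_j \M_{j,\cdot} = \bz$ among rows of $\M$ restricts, by projecting out the $i$-th coordinate, to a nontrivial linear relation among the rows of $\M_{ii}$ indexed by $J$, contradicting their independence. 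Hence $\M$ has at least $r$ linearly independent rows, yielding $\rk(\M) \geq r$.

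There is no real obstacle here; the only thing to be careful about is bookkeeping of indices, since $\M_{ii}$ re-indexes the surviving rows and columns into $[n-1]$. Writing the restriction map from $\R^n \to \R^{n-1}$ that drops the $i$-th coordinate makes the projection argument transparent and avoids confusion. Since the statement does not require $\M$ to be symmetric, one could equivalently run the argument on columns instead of rows, which would give the same conclusion via $\rk(\M) = \rk(\M^\top)$.
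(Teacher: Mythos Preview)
Your argument is correct: the submatrix $\M_{ii}$ is obtained from $\M$ by deleting a row and a column, and the rank of any submatrix is bounded above by the rank of the ambient matrix, so $\rk(\M) \geq \rk(\M_{ii}) = r$. The paper states this lemma without proof, treating it as a standard linear-algebra fact, so there is no alternative approach to compare against; your row-independence argument is exactly the kind of elementary justification one would supply if pressed.
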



\begin{lem}\label{lem:4}
Consider $\M\in \mathcal{S}^n$. Let \(J = [n]\sm\{i\}\) and $\m^{(i)}$ be the $i^{th}$ row of $\M$. Then $$\det(\M) = m_{ii}\det(\M_{J}) - \m^{(i)\top}_{J} \adj(\M_{J})~\m_{J}^{(i)}.$$
\end{lem}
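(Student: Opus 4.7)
The plan is to reduce the identity to the standard block-matrix determinant formula (Schur complement), then extend to the degenerate case by a density argument. First I would reduce to the case $i=n$ via a simultaneous row-column permutation: choose a permutation matrix $P$ with $Pe_i = e_n$ and set $\widetilde{\M}=P\M P^\top$. Conjugation by $P$ preserves $\det(\M)$ (since $\det(P)^2=1$), and the $(n-1)\times(n-1)$ principal block $\widetilde{\M}_{[n-1]}$ is a simultaneous row-column permutation of $\M_J$, so both $\det(\M_J)$ and the scalar $\m^{(i)\top}_J\adj(\M_J)\m^{(i)}_J$ (which is coordinate-independent once the vector and the matrix are permuted compatibly) are unchanged. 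Thus it suffices to prove the identity when the distinguished row/column sits in the last position.

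With $i=n$, write $\M$ in block form
$$\M \;=\; \begin{pmatrix} \M_J & \m^{(n)}_J \\ \m^{(n)\top}_J & m_{nn}\end{pmatrix}.$$
Assume temporarily that $\M_J$ is invertible. The Schur complement formula with respect to the lower-right $1\times 1$ block gives
$$\det(\M)\;=\;\det(\M_J)\bigl(m_{nn}-\m^{(n)\top}_J\M_J^{-1}\m^{(n)}_J\bigr)\;=\;m_{nn}\det(\M_J)-\m^{(n)\top}_J\bigl(\det(\M_J)\,\M_J^{-1}\bigr)\m^{(n)}_J.$$
Substituting the adjugate identity $\det(\M_J)\,\M_J^{-1}=\adj(\M_J)$ produces exactly the claimed formula.

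To finish, I would handle the case when $\M_J$ is singular. Both sides of the desired equation are polynomial functions of the entries of $\M$: the left side by the Leibniz expansion, and the right side because every entry of $\adj(\M_J)$ is a signed $(n-2)\times(n-2)$ minor of $\M_J$. Since the identity already holds on the Zariski-open dense set $\{\M:\det(\M_J)\neq 0\}$, two polynomials agreeing on such a set must coincide identically. (Equivalently, one perturbs to $\M_J+\varepsilon\mathbb{I}_{n-1}$, applies the invertible case, and passes to the limit $\varepsilon\to 0^+$ using continuity of $\det$ and $\adj$.) The only mildly subtle step is the bookkeeping of the permutation reduction; the heart of the proof is simply the Schur complement together with the adjugate relation.
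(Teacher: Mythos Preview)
Your argument is correct. The reduction to $i=n$ by simultaneous row--column permutation is fine (and mirrors the paper's ``without loss of generality $i=1$'' step), the Schur complement identity $\det(\M)=\det(\M_J)\bigl(m_{nn}-\m^{(n)\top}_J\M_J^{-1}\m^{(n)}_J\bigr)$ together with $\adj(\M_J)=\det(\M_J)\,\M_J^{-1}$ immediately gives the formula in the invertible case, and the polynomial/continuity extension to singular $\M_J$ is standard and valid.

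The paper takes a different, more elementary route: it expands $\det(\M)$ along the $i$-th row via Laplace, and then for each off-diagonal cofactor $\det(\M_{1j})$ it explicitly tracks column swaps to rewrite $\det(\M_{1j})$ as $(-1)^j$ times an inner product of $\m^{(1)}_J$ with the $(j{-}1)$-th row of $\adj(\M_J)$; summing recovers the quadratic form $\m^{(1)\top}_J\adj(\M_J)\m^{(1)}_J$. The paper's approach is uniform (no invertibility hypothesis or limiting argument is needed) and uses nothing beyond cofactor expansion, at the price of some careful sign and index bookkeeping. Your approach is shorter and more conceptual once one is willing to invoke the Schur complement, but requires the extra density step to cover the singular case. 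Both are perfectly acceptable proofs of the same identity.
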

\begin{proof}
Without loss of generality, let $i = 1$. From Laplace's equation, by expanding along the first row we have,
\begin{align}\label{prop_eq:6_1}
    \det(\M) &= \sum_{j=1}^n (-1)^{(1+j)} m_{1j}\det(\M_{1j}) = m_{11}\det(\M_{11})+\sum_{j=2}^n (-1)^{(1+j)} m_{1j}\det(\M_{1j}).
\end{align}
Let $\m^1$ be the first column of the matrix $ \M $ and $ J \define [n]\sm\{1\} $.
One can obtain the sub-matrix $\M_{1j}$ from $\M_{11}$ by performing two operations on $\M_{11}$. 
\begin{enumerate}
    \item The first operation is to replace $(j-1)^{th}$ column of $ \M_{11}$ with $\m_{J_1}^1$. Let the matrix obtained from the first operation be $\M'_{1j}$. 
Then $\M'_{1j}$ is same as $\M_{11}$ except the $(j-1)^{th}$ column. Therefore $(j-1)^{th}$ row of $\adj(\M_{11})$ and $\adj(\M'_{1j})$ are the same.

\begin{align*}
    \adj(\M'_{1j})\M'_{1j} = \det(\M'_{1j}) \mathbb{I}_{n-1}.
\end{align*}
Here $ \mathbb{I}_{n-1} $ is the $ (n-1)\times (n-1) $ identity matrix.
Let the $(j-1)^{th}$ row of $\adj(\M_{11})$ be $(\mathbf{a}^{j-1})^{\top}$. We have,
\begin{equation}\label{prop_eq:6_2}
    \det(\M'_{1j}) = (\mathbf{a}^{j-1})^{\top}\m_{J_1}^{1}.
\end{equation}
\item The second operation is to sequentially interchange columns of the matrix $\M'_{ij}$ i.e, interchange the $(j-1)^{th}$ column with the $ (j-2)^{th} $, then the $ (j-2)^{th} $ with the $ (j-3)^{th} $, and so on till $ \m_{J_1}^1 $ is the first column. Each column interchange changes the sign of the determinant by $-1$. As there are $ j-2 $ column interchanges we have,
    \begin{equation}\label{prop_eq:6_3}
        \det(\M_{1j}) = (-1)^{j} \det(\M'_{1j}).
    \end{equation}
\end{enumerate}
From \eqref{prop_eq:6_2} and \eqref{prop_eq:6_3}, we obtain
\begin{equation}\label{prop_eq:6_4}
    \det(\M_{1j}) = (-1)^j \mathbf{a}_{j-1} \m_{J_1}^1.
\end{equation}

Combining \eqref{prop_eq:6_1} and \eqref{prop_eq:6_4},
\begin{align*}
    \det(\M) &= m_{11}\det(\M_{J}) +\sum_{j=2}^n (-1)^{1+j}\m_{1j} \det(\M_{1j})\\
     &= m_{11}\det(\M_{J}) + \sum_{j=2}^n (-1)^{1+j}\m_{1j} (-1)^j (\mathbf{a}^{j-1})^{\top}\m^{1}_{J}\\
     &= m_{11}\det(\M_{J})- (\m_{J}^{1})^{\top} \adj(\M_{11})\m^{1}_{J}.
\end{align*}
\end{proof}
\begin{lem}[\cite{horn2012matrix}]\label{lem:5}
    Consider $\M\in\mathcal{S}^n$. If $\mathsf{rank}(\M) = n-1$ then $\mathsf{rank}(\adj(M)) = 1$. Furthermore, if $\M\in \mathcal{S}_+^n$ then $\adj(\M) \succeq \bz$ i.e, $\adj(\M) = \u\u^{\top}$ for some $\u\in\R^n$.
\end{lem}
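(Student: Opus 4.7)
The plan is to exploit the classical identity $\M\,\adj(\M) = \det(\M)\,\mathbb{I}_n$. Since $\rk(\M) = n-1$, we have $\det(\M) = 0$, hence $\M\,\adj(\M) = 0$. This forces every column of $\adj(\M)$ to lie in the kernel of $\M$, which has dimension $n - \rk(\M) = 1$. Consequently the column space of $\adj(\M)$ is at most one-dimensional, giving $\rk(\adj(\M)) \leq 1$. To upgrade this inequality to equality, I would observe that $\rk(\M) = n-1$ implies some $(n-1)\times(n-1)$ minor of $\M$ is nonzero; but every such minor appears (up to sign) as an entry of $\adj(\M)$, so $\adj(\M) \neq \mathbf{0}$ and hence $\rk(\adj(\M)) = 1$.

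For the second assertion, first note that $\adj(\cdot)$ commutes with transposition, so $\adj(\M)^\top = \adj(\M^\top) = \adj(\M)$; that is, $\adj(\M)$ is symmetric. Being a symmetric rank-one matrix, it must take the form $\sigma\,\u\u^\top$ for some $\u \in \R^n$ and some $\sigma \in \{-1,+1\}$. To pin down the sign, I would examine the diagonal: by definition $\adj(\M)_{ii} = \det(\M_{ii})$, the determinant of a principal $(n-1)\times(n-1)$ submatrix of $\M$. Since $\M \in \S{n}$, every principal submatrix is itself PSD and so $\det(\M_{ii}) \ge 0$. As $\adj(\M)$ is nonzero and rank one, at least one diagonal entry $\sigma u_j^2$ must be strictly positive, which forces $\sigma = +1$. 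Therefore $\adj(\M) = \u\u^\top \succeq \bz$, with the scalar $\sqrt{\sigma}$ absorbed into $\u$.

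The only step that calls for genuine care is this final sign determination; everything else is an immediate consequence of the Cramer-style identity $\M\,\adj(\M) = \det(\M)\,\mathbb{I}_n$ together with the elementary fact that principal submatrices of PSD matrices are PSD. Alternatively, one could bypass the case analysis by using the spectral decomposition $\M = Q\Lambda Q^\top$ with $\Lambda = \diag(\lambda_1,\dots,\lambda_{n-1},0)$, $\lambda_i > 0$, and the multiplicativity of $\adj$ to compute $\adj(\M) = (\lambda_1 \cdots \lambda_{n-1})\,q_n q_n^\top$ directly, where $q_n$ is the eigenvector associated with the zero eigenvalue; but the diagonal-entry argument is more elementary and avoids invoking orthogonal diagonalization.
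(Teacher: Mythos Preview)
Your proof is correct. Note, however, that the paper does not supply its own proof of this lemma: it is stated with a citation to \cite{horn2012matrix} and used as a black box, so there is no in-paper argument to compare against. The argument you give---using $\M\,\adj(\M)=\det(\M)\,\mathbb{I}_n$ to force the columns of $\adj(\M)$ into the one-dimensional kernel, then reading the sign off the nonnegative principal minors on the diagonal---is precisely the standard textbook derivation one finds in sources such as Horn and Johnson, so in that sense your approach coincides with what the cited reference would contain. The spectral alternative you mention at the end is also standard and equally valid.
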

\begin{lem}\label{lem:6}
  Let $J_{1},J_{2},\ldots,J_{k}\subset[n]$\ignore{$J_{1},J_{2},\ldots,J_{k}\subset[n]$ where $|J_i|=n-1~\forall i \in [k]$. If $k\leq n-1$ and $J_i\neq J_j\,\, \forall i\neq j$,}
  be pairwise distinct subsets of $[n]$ with $|J_i| = n-1$.
  Then $\displaystyle \bigcap_{i=1}^k J_{i} \neq \emptyset$.
\end{lem}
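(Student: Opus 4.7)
The plan is to reduce the statement to a simple counting argument about complements. Each $J_i \subset [n]$ with $|J_i| = n-1$ is the complement of a unique singleton, so I would write $J_i = [n] \setminus \{a_i\}$ for some $a_i \in [n]$.

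First, I would observe that the assignment $J_i \mapsto a_i$ is a bijection between $(n-1)$-element subsets of $[n]$ and elements of $[n]$. In particular, the hypothesis that $J_1, \ldots, J_k$ are pairwise distinct translates immediately into $a_1, \ldots, a_k$ being pairwise distinct elements of $[n]$.

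Next, I would compute the intersection by distributing complementation:
\begin{equation*}
    \bigcap_{i=1}^k J_i = \bigcap_{i=1}^k \bigl([n] \setminus \{a_i\}\bigr) = [n] \setminus \{a_1, a_2, \ldots, a_k\}.
\end{equation*}
Since the $a_i$ are distinct, the set $\{a_1, \ldots, a_k\}$ has exactly $k$ elements, so the intersection has exactly $n - k$ elements. Under the hypothesis $k \leq n-1$ (implicit in the statement, since otherwise distinctness would force $k = n$ and the claim would fail), we conclude $|\bigcap_i J_i| = n - k \geq 1$, so the intersection is nonempty.

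There is no real obstacle here; the only subtlety is recognizing that the upper bound $k \leq n-1$ is necessary and is part of the intended hypothesis, as there are only $n$ subsets of $[n]$ of size $n-1$, and taking all of them yields empty intersection.
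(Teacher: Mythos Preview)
Your proof is correct and essentially identical to the paper's: both write $J_i = [n]\setminus\{a_i\}$, form the complement $[n]\setminus\{a_1,\ldots,a_k\}$, and observe it has at least one element. You are also right to flag that the bound $k \le n-1$ is needed; the paper's proof uses it implicitly when asserting $|J|\ge 1$.
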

\begin{proof}
    As $|J_i| = n-1$ \ignore{it can be written as} we can write $J_i = [n]\backslash\{j_i\}$ for some $j_i \in [n]$. Let $J = [n]\backslash\{j_1,j_2,\ldots,j_k\}$. Observe that $J\subset J_{i}\,\,\forall i\in [k]$. As $|J|\geq 1$, we have that $J\subseteq \displaystyle \bigcap_{j=1}^k J_{i} \neq \emptyset$.
\end{proof}
\ignore{\section{Main results}\label{sec:results}
\begin{thm}\labeli[prop:extrays] For $\mathbf{X} \in \Rsym{n}$, the following hold:
    \begin{enumerate}
        \item\labelii[prop:extrays1] $\X$ spans an extreme ray of $\Snk{n}{k}$ if all $k \times k$ principal submatrices of $\X$ are rank-1 and positive semidefinite.
        \item\labelii[prop:extrays2] $\X$ spans an extreme ray of $\Snk{n}{2}$ if and only if all $2 \times 2$ principal submatrices of $\X$ are rank-1 and positive semidefinite.
        \item\labelii[prop:extrays3] $\X$ spans an extreme ray of $\Snk{n}{n-1}$ if and only if all $(n-1) \times (n-1)$ principal submatrices of $\X$ are 
        \begin{enumerate}
            \item either rank-1 and positive semidefinite
            \item or rank-$(n-2)$ such that $\det(\X) < 0$.
        \end{enumerate}
    \end{enumerate}
\end{thm}
The corollary follows from Theorem \ref{prop:extrays3}.
\begin{restatable}{cor}{Gnkextrays}
\label{cor:Gnkextrays}
    \(G(n,k) = \dfrac{k}{k-1}\mathbb{I} - \dfrac{1}{k-1}\mathbf{1}\mathbf{1}^{\top}\) spans an extreme ray of \(\Sminors{n}{k}\).    
\end{restatable}
Blekherman et al.~\cite{blekherman2022hyperbolic} introduced non-singular locally singular ($\emph{NLS}$) matrices as following: 
\begin{dfn}[NLS Matrix~\cite{blekherman2022hyperbolic}]
A matrix $\M\in \calS^{n,k}_+$ is (\emph{NLS}) for $n\geq 5$ and $3 \leq k\leq n-2$ if all the $k\times k$ principal minors are zero and $\det(\M)\neq 0$. 
\end{dfn}
NLS matrices are \textit{diagonally congruent} to $G(n,k)$ matrices which are used derive distance bounds between $\calS^{n,k}_+$ and $\calS^n_+$ \cite{blekherman2022hyperbolic}.

\begin{restatable}{cor}{NLSextremerays}
\label{cor:NLSextremerays}
    An NLS matrix $\M \in \calS^{n,k}_+$, for $n\geq 5$ and $3\leq k\leq n-2$, spans an extreme ray of $\calS^{n,k}_+$.
\end{restatable}

}
\section{Proofs}\label{sec:proofs}
\subsection{Proof Strategy}
The idea for characterizing the extreme rays of $\calS^{n,2}_+$ is to construct a perturbation of the matrix $\M \in \calS^{n,2}_+$ if any of the $2\times 2$ principal sub-matrix has rank $2$. And if a $\M\in \calS_+^{n,2}$ such that all its $2\times 2$ principal minors are rank 1 then the result follows. 

The idea for characterizing the extreme rays of $\mathcal{S}^{n,n-1}_+$ is to characterize the ranks of the $(n-1)\times (n-1)$ principal sub-matrices of a $\M \in \mathcal{S}^{n,n-1}_+$ which span an extreme ray. We first eliminate the cases for which the $\M \in \mathcal{S}_+^{n,n-1}$ under given ranks of $(n-1)\times (n-1)$ principal sub-matrices.
The characterization of the $(n-1)\times (n-1)$ principal sub-matrices is divided into as follows,
\begin{itemize}
    \item There exists at least one $(n-1) \times (n-1)$ principal sub-matrix whose rank is less than or equal to $n-3$. We prove that these matrices do not span an extreme rays.
    \item There exists at least one $(n-1) \times (n-1)$ principal sub-matrix whose rank is $n-1$. We prove this by constructing a perturbation of $\X$ for $\M \in \calS_+^{n,n-1}$ which satisfies the aforementioned hypothesis.
\end{itemize}

The final case is if all the $(n-1)\times (n-1)$ principal sub-matrices have rank $n-2$. If all the $(n-1)\times (n-1)$ sub-matrices have rank $n-2$ and $\mathsf{det}(\M) \geq 0$ then this matrix is positive semidefinite matrix of rank $n-2$, therefore it doesn't span an extreme ray of $\mathcal{S}^{n,n-1}_+$.
Finally, we prove that if a matrix $\M\in \calS^{n,n-1}_+$ and its $(n-1)\times (n-1)$ principal sub-matrices have rank $n-2$ and $\mathsf{det}(\M)< 0$ then it spans an extreme ray of $\mathcal{S}^{n,n-1}_+$.
\subsection{Proof of Theorem \ref{prop:extrays1}}
\begin{proof}
    Let $\X \in \Snk{n}{k}$ with all $k \times k$ principal submatrices being rank-1 positive semidefinite, and suppose that $\X = \frac{1}{2}(\Y + \mathbf{Z})$ for some $\Y, \mathbf{Z} \in \Snk{n}{k}$. Additionally, let $\mathcal{I}_k$ denote the set of all index subsets of $\{1,2,\ldots, n\}$ of cardinality $k$ and for $I \in \mathcal{I}_k$, $X_I = [x_{ij}]_{i,j \in I}$. For any $I \in \mathcal{I}_k$, we have 
     \[{\X_I} = 
     \frac{1}{2}{\Y_I}
     +
     \frac{1}{2}{\mathbf{Z}_I}.
     \]
     As $\X_I, \Y_I$, and $\mathbf{Z}_I$ all belong to $\S{k}$, and $\X_I$ is of rank 1, we see that $\Y_I$ and $\mathbf{Z}_I$ must be scalar multiples of $\X_I$. By iterating $I$ over all elements of $\mathcal{I}_k$, we see that $\Y$ and $\mathbf{Z}$ must be scalar multiples of $\X$.
\end{proof}
\subsection{Proof of Theorem \ref{prop:extrays2}}
\begin{proof}
     From definition of $\Snk{n}{2}$, all $2 \times 2$ principal submatrices of $\X$ are positive semidefinite. Now, if a particular minor $\begin{bmatrix}
                 x_{ii} & x_{ij}\\
                 x_{ij} & x_{jj}
              \end{bmatrix}$ 
     has rank 2, i.e., $x_{ii}x_{jj} - x_{ij}^2 > 0$, then $\X = \frac{1}{2}(\Y + \mathbf{Z})$, where $\Y = \X - \varepsilon(F_{ij} + F_{ji})$, and $\mathbf{Z} = \X + \varepsilon(F_{ij} + F_{ji})$ for some sufficiently small $\varepsilon > 0$. Here, $F_{ij}$ is the matrix with 1 in the $(i,j)$-th entry and 0 elsewhere. Sufficiency follows from Theorem \ref{prop:extrays1}.
\end{proof}
\subsection{Proof of Theorem \ref{prop:extrays3}}

\begin{prop}\label{prop:6}
Consider $\M\in \Sminors{n}{n-1}$ such that $\rk(\M) > 1$. If $\exists~i \in [n]$ such that $\rk(\M_{ii}) < n-2$ then $\M$ does not span an extreme ray of \(\mathcal{S}_+^{n,n-1}\).\ignore{If one of the $(n-1)\times (n-1)$ principal minors has rank less than $n-2$ then $\M$ does not span an extreme ray of \(\mathcal{S}_+^{n,n-1}\).}
\end{prop}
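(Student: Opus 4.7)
The plan is to show that the hypothesis actually forces $\M$ to be positive semidefinite, after which $\rk(\M)\ge 2$ immediately yields a nontrivial decomposition of $\M$ inside $\Sminors{n}{n-1}$. The key observation driving the argument is that the low rank of $\M_{ii}$ reduces Lemma \ref{lem:4} to the clean conclusion $\det(\M)=0$.

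Concretely, since $\rk(\M_{ii}) < n-2$, every $(n-2)\times(n-2)$ minor of the $(n-1)\times(n-1)$ matrix $\M_{ii}$ vanishes. As these minors are, up to sign, precisely the entries of $\adj(\M_{ii})$, this gives $\adj(\M_{ii})=0$; we also have $\det(\M_{ii})=0$ since $\M_{ii}$ is not full rank. Substituting both into Lemma \ref{lem:4} with this choice of $i$ yields $\det(\M)=0$.

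Next I claim that $\M\in\S{n}$. By the classical principal-minor characterization of positive semidefiniteness, it suffices to check that every principal minor of $\M$ is non-negative. Any such minor of size $k\leq n-1$ indexed by $I\subseteq [n]$ is also a principal minor of the PSD matrix $\M_{jj}$ for any $j\in[n]\sm I$, and is therefore non-negative; and the unique $n\times n$ principal minor equals $\det(\M)=0$. Hence $\M\succeq 0$.

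Finally, since $\M\in\S{n}$ with $\rk(\M)\geq 2$, a spectral decomposition gives $\M=\sum_{l=1}^{r}\lambda_l u_l u_l^{\top}$ with $r\geq 2$ and each $\lambda_l>0$. Splitting $\M=\lambda_1 u_1 u_1^{\top}+\sum_{l\geq 2}\lambda_l u_l u_l^{\top}$, both summands lie in $\S{n}\subseteq \Sminors{n}{n-1}$ and neither is a scalar multiple of $\M$ by rank considerations, so $\M$ is not an extreme ray of $\Sminors{n}{n-1}$. The only delicate point in the whole argument is spotting the adjugate vanishing; once that is in place, Lemma \ref{lem:4} and the minor characterization of PSD do all the remaining work.
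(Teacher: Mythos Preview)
Your proof is correct and follows essentially the same route as the paper: first show $\det(\M)=0$, then use the principal-minor criterion to deduce $\M\succeq 0$, and finally decompose via the spectral theorem using $\rk(\M)\ge 2$. The only cosmetic difference is in obtaining $\det(\M)=0$: the paper argues directly that each cofactor $\M_{ij}$ has rank at most $n-2$ (by counting common columns with $\M_{ii}$) and then applies Laplace expansion, whereas you invoke Lemma~\ref{lem:4} together with $\adj(\M_{ii})=0$; the two arguments are equivalent, and yours is arguably the cleaner way to package it given that Lemma~\ref{lem:4} is already available.
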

 \begin{proof}
     Without loss of generality assume $\mathsf{rank}(\M_{11})<n-2$.
 The principal sub-matrix $\M_{11}$ and the sub-matrix $\M_{1j}$ have $n-2$ columns in common for $j=2,3,\ldots,n$. 
 These $n-2$ columns will have a rank of at most $n-3$ and adding a column to these $n-2$ common columns will make the rank of $\M_{1j}$ at most $n-2$.
Since $\det(\M_{1j}) = 0\, \forall j = 1,\ldots,n$, $\det(\M) = 0$. As all the principal minors are non-negative, this implies that $\M \succeq \bz$.
As $ \M\succeq \bz $ and $\mathsf{rank}(\M) >1 $, we can write $ \M $ as a conic combinations of rank-1 psd matrices. Hence $\M$ does not span an extreme ray.

\end{proof}

\begin{prop}\label{prop:7.1}
Let $\M \in S_{+}^{n,n-1}$ and all of its \((n-1)\times (n-1)\) principal sub-matrices have rank at least rank $n-2$. If \(\M\) has at least one full rank \((n-1)\times (n-1)\) principal sub-matrix, then $\M$ does not span an extreme ray of \(\mathcal{S}_+^{n,n-1}\).
\end{prop}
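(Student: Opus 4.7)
The plan is to exhibit $\M$ as the midpoint of two distinct rays in $\Snk{n}{n-1}$ via a rank-one perturbation $\E = \mathbf{g}\mathbf{g}^\top$. Concretely, I will pick $\mathbf{g} \in \R^n \setminus \{\bz\}$ so that $\M + t\E$ and $\M - t\E$ both lie in $\Snk{n}{n-1}$ for all sufficiently small $t > 0$, and verify that $\E$ is not a scalar multiple of $\M$.

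By permuting coordinates I shall assume $\M_{11}$ has full rank $n-1$. The rank hypothesis then forces every remaining $\M_{ii}$ ($i \ne 1$) to have rank either $n-1$ or $n-2$. Let $S = \{i \in [n] : \rk(\M_{ii}) = n-2\}$; since $1 \notin S$, we have $|S| \le n-1 < n$. For each $i \in S$, the one-dimensional null space of $\M_{ii}$ furnishes a nonzero null vector $\u_i \in \R^{n-1}$, and I write $\tilde{\u}_i \in \R^n$ for its extension by $0$ in coordinate $i$. Because $|S| < n$, the subspace $\{\mathbf{g} \in \R^n : \mathbf{g}^\top \tilde{\u}_i = 0 \text{ for all } i \in S\}$ has positive dimension, so I can pick a nonzero $\mathbf{g}$ inside.

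The key verification is that $\M \pm t\mathbf{g}\mathbf{g}^\top \in \Snk{n}{n-1}$ for small $t > 0$. Using $(\mathbf{g}\mathbf{g}^\top)_{ii} = \mathbf{g}_{\sim i}\mathbf{g}_{\sim i}^\top$, where $\mathbf{g}_{\sim i}$ denotes $\mathbf{g}$ with the $i$-th coordinate deleted, I split into two cases. If $i \notin S$ then $\M_{ii}$ is positive definite and $\M_{ii} \pm t\mathbf{g}_{\sim i}\mathbf{g}_{\sim i}^\top$ remains positive definite for small $|t|$ by continuity of eigenvalues. If $i \in S$, then $\M_{ii} + t\mathbf{g}_{\sim i}\mathbf{g}_{\sim i}^\top$ is PSD as a sum of two PSD matrices, while for the opposite sign the orthogonality $\mathbf{g}_{\sim i}^\top \u_i = \mathbf{g}^\top \tilde{\u}_i = 0$ ensures that both $\M_{ii}$ and $\mathbf{g}_{\sim i}\mathbf{g}_{\sim i}^\top$ annihilate $\u_i$. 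On the complementary subspace $\u_i^\perp$, $\M_{ii}$ is positive definite with smallest eigenvalue $\mu > 0$, and Cauchy--Schwarz gives $w^\top (\M_{ii} - t\mathbf{g}_{\sim i}\mathbf{g}_{\sim i}^\top) w \ge (\mu - t\|\mathbf{g}_{\sim i}\|^2)\|w\|^2$ for any $w \in \u_i^\perp$, so the matrix stays PSD whenever $t \|\mathbf{g}_{\sim i}\|^2 \le \mu$.

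Finally, since $\rk(\mathbf{g}\mathbf{g}^\top) = 1$ whereas $\rk(\M) \ge \rk(\M_{11}) = n-1 \ge 2$ by Lemma \ref{lem:3}, the perturbation $\E$ cannot be proportional to $\M$, so $\M + t\E$ and $\M - t\E$ are not scalar multiples of $\M$; the identity $\M = \tfrac{1}{2}((\M + t\E) + (\M - t\E))$ then contradicts $\M$ spanning an extreme ray. The main technical obstacle I anticipate is simultaneously satisfying the PSD requirement at every singular principal submatrix $\M_{ii}$; choosing $\E$ as a rank-one PSD direction orthogonal to every extended null vector $\tilde{\u}_i$ resolves this in one stroke, and the dimension count $|S| < n$ is exactly where the hypothesis of at least one full-rank $(n-1) \times (n-1)$ principal submatrix is essential.
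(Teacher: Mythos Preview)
Your proof is correct and follows the same overall plan as the paper---exhibit a rank-one perturbation $\u\u^\top$ so that $\M\pm\varepsilon\u\u^\top\in\Snk{n}{n-1}$ for small $\varepsilon>0$---but the execution is different in two respects. First, the paper chooses $\u$ concretely: it picks an index $k$ with $\M_{kk}$ positive definite (which exists because at most $n-1$ of the $(n-1)\times(n-1)$ principal submatrices are singular, so Lemma~\ref{lem:6} gives a common index in all the singular ones' complements) and sets $\u=\m^{k}$, the $k$-th column of $\M$; you instead argue abstractly that the space of vectors orthogonal to all extended null vectors $\tilde{\u}_i$ has positive dimension. In fact the paper's $\m^{k}$ lies in your orthogonal subspace, since $(\u_i)^\top(\m^{k})_{\sim i}=0$ follows from $\u_i^\top\M_{ii}=0$ and $k\ne i$. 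Second, to verify that the perturbed $(n-1)\times(n-1)$ blocks stay PSD, the paper works with determinants: it checks every principal minor of size $\le n-1$ via the matrix determinant lemma (Lemma~\ref{lem:2}) and the adjugate identity $\adj(\M_H)\M_H=\det(\M_H)\mathbb{I}$, whereas you directly control the quadratic form on the null direction and its orthogonal complement. Your route is shorter and avoids both the separate treatment of the zero-column case and the bookkeeping over all smaller minors; the paper's route has the advantage of producing an explicit perturbation vector and an explicit bound on $\varepsilon$.
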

\begin{proof}
        Observe that as $\M $ has a full rank $ (n-1)\times (n-1) $ principal sub-matrix\ignore{ whose rank is $ n-1 $ this implies that $\mathsf{rank}(\M) = n-1 $ and hence} it follows that $\M$ can have at most one zero column. If $\M$ has a zero column, then $\det(\M) = 0$ and consequently $\M$ is positive semidefinite matrix.
        As $\M\succeq \bz$  and $\mathsf{rank}(\M) = n-1 $ hence it can be expressed as a conic combination of rank 1 matrices.\\
        Alternatively, if $ \M $ does not have any zero columns then we prove the existence of a rank 1 matrix, $ \varepsilon\u\u^{\top} $ where
        $ \u\in\R^n $ and $ \varepsilon\in \R_{++} $, such that $ \M +\varepsilon\u\u^{\top},\M -\varepsilon\u\u^{\top}\in \mathcal{S}_+^{n,n-1} $. 
        However, $ \M +\varepsilon \u\u^{\top}\in \mathcal{S}_+^{n,n-1} $ since $ \varepsilon\u\u^{\top}\in \mathcal{S}_+^{n,n-1} $ for all $ \varepsilon > 0 $ and $ \u\in \R^n $. 
        It suffices to show that, there exists $ \varepsilon > 0$ and $ \u \in \R^n\backslash\{\bz\}$ such that $ \M -\varepsilon\u\u^{\top}\in \mathcal{S}_+^{n,n-1} $. Alternatively, it suffices to show that 
\begin{equation}\label{prop_eq:7_1}
    \sign(\det(\M_H - \varepsilon\u_H\u_H^{\top}))\geq \sign(\det(\M_H)) \,\, \forall H\subset[n] \text{ and } |H|\leq n-1 .
\end{equation}

    Let $\mathcal{H}_{fr}\define \{J\subset[n]: \mathsf{rank}(\M_J) = |J|\text{ and } |J|\leq n-1\}$ and $\mathcal{H}_{nr} \define \{J\in[n]: \mathsf{rank}(\M_J)< |J| \text{ and } |J|\leq n-1\}$. From the matrix determinant lemma and \eqref{prop_eq:7_1}, we need to show that 
    \begin{equation}\label{prop_eq:7_4}
        \det(\M_{H} - \varepsilon\u_H\u_H^{\top}) = \det(\M_{H}) - \varepsilon\u^{\top}_H\adj(\M_H)\u_H > 0 \quad\forall H\in \mathcal{H}_{fr}, \text{ and}
    \end{equation}
    \begin{equation}\label{prop_eq:7_5}
            \det(\M_{H} - \varepsilon\u_H\u_H^{\top}) = - \varepsilon\u^{\top}_H\adj(\M_H)\u_H =0 \quad \forall H\in \mathcal{H}_{nr}.
    \end{equation}
   \ignore{\eqref{prop_eq:7_4} and \eqref{prop_eq:7_5} hold if and only if \eqref{prop_eq:7_2} and \eqref{prop_eq:7_3}.} Rewriting \eqref{prop_eq:7_4} and \eqref{prop_eq:7_5} we get,
   \begin{align}\label{prop_eq:7_2}
       \varepsilon < \min_{H \in \mathcal{H}_{fr}}\frac{\det(\M_H)}{\u^{\top}_H\adj(\M_H)\u_H}
   \end{align}
   \begin{equation}\label{prop_eq:7_3}
       \adj(\M_H)\u_H = \bz   ~\forall H \in \mathcal{H}_{nr}.
   \end{equation} 
   To see this, observe that if $\M$ has non-zero columns, and given that $\M\in \mathcal{S}^{n,n-1}_+$, then we have that the diagonal elements of $\M$ are non-zero. This implies that $\{j\}\in \mathcal{H}_{fr}\,\,\forall j \in[n]$. As $\u\neq\bz$, we have that $\exists i\in[n]$ such that $u_i \neq 0$. This implies that for $H=\{i\}$ we have that $u_i\adj(\M_H)u_i$ is non-zero as $\adj(\M_H)=1$. Therefore, the right-hand side term of the inequality \eqref{prop_eq:7_2} is positive and finite.
    Hence $\varepsilon $ has to satisfy \eqref{prop_eq:7_2} and $\u$ has to satisfy \eqref{prop_eq:7_3}. 
    Note that, if $\mathcal{H}_{nr}$ is empty, then one can find $\varepsilon>0$ for any $\u\neq \bz$ from \eqref{prop_eq:7_2}. 
    
   Let $\mathcal{J} \define \{J\in[n]: |J| = n-1 \}$ and $\mathcal{I} \define \{I\in[n]: \mathsf{rank}(\M_I) = n-2\}$. Observe that $\mathsf{rank}(\M_J) = n-1\,\, \forall J\in\mathcal{J}\backslash\mathcal{I}$. If $\mathcal{I} = \emptyset$ then $\mathcal{H}_{nr}$ is empty. To see this, observe that $\M_{J}$ is positive definite $\forall J\in\mathcal{J}$ and all of the principal sub-matrices of $\M_J$ are also positive definite if $\mathcal{I} = \emptyset$. Hence for any $\u\neq \bz$ we can find an $\varepsilon>0$ from \eqref{prop_eq:7_2} such that it satisfies \eqref{prop_eq:7_1}.
   If $\mathcal{I}\neq \emptyset$ and given that $|\mathcal{I}|\leq n-1$, then there exists a $k \in \displaystyle \bigcap_{J\in \mathcal{I}}J$ from Lemma \ref{lem:6}. Let $K \define [n]\backslash \{k\}$ and $\M_K$ is positive definite as $K\in \mathcal{J}\backslash\mathcal{I}$.\\
   Let $H\in \mathcal{H}_{nr}$ then $|H\cap K| = |H| - 1$ and $k\in H$. To see this, if $H\subset K$ then $\M_H$ is positive definite matrix and this would imply that $H \subset \mathcal{H}_{fr}$. As $\M_{H\cap K}$ is principal sub-matrix of the matrix of $\M_K$ therefore it is full rank matrix. Hence $\mathsf{rank}(\M_{H\cap K}) = |H\cap K| = |H| -1$. Observe,
   \begin{align*}
       |H| - 1 = |H\cap K| \leq \mathsf{rank}(\M_{H\cap K}) \leq \mathsf{rank}(\M_{H}) \leq |H|-1.
   \end{align*}
It follows that $\rk(\M_H)  = |H| -1 \forall H \in \mathcal{H}_{nr}$. This further implies that $\adj(\M_H)\succeq \bz$ and $\mathsf{rank}(\adj(\M_H)) =1$. Let $\m^k$ be the $k^{th}$ column in the matrix $\M$ then $\m_{H}^k$ is a column $\M_{H}$ as $k\in H$. We have,
\begin{align*}
\adj(\M_H)\M_H = \det(\M)\mathbb{I}_{n-1} = \bz, \text{ which implies }~~~~\adj(\M_H)\m_H^k = \bz.
\end{align*}
Therefore $\u = \m^k$ satisfies \eqref{prop_eq:7_3} and $\varepsilon = \dfrac{1}{2} \displaystyle\min_{H \in \mathcal{H}_{fr}}\dfrac{\det(\M_H)}{\u^{\top}_H\adj(\M_H)\u_H}$. 
 \end{proof}

\begin{prop}\label{prop:9}
    Let $\M\in \Sminors{n}{n-1}$ and $\mathsf{rank}(\M_{J}) = n-2\,\,\forall J\in [n]$ where $|J| = n-1$ and $\det(\M) < 0$. If there exists $\A,\B\in \Sminors{n}{n-1}$ such that $\M = \A + \B$, then there exists $n$ linearly independent vectors $\{\ve^i\}_{i=1}^n$ such that 
    \begin{align*}
        \M_{J_i}\ve_{J_i}^i &=\bz\\
        \A_{J_i}\ve_{J_i}^i &= \bz\\
        \B_{J_i}\ve_{J_i}^i &= \bz
    \end{align*}
    where $J_i = [n]\backslash\{i\}$ and $i\in[n]$. Furthermore, all the diagonal elements of $\M$ are non-zero. 
\end{prop}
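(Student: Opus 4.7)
The plan is to build the vectors $\ve^i$ directly from the one-dimensional null spaces of the $(n-1)\times(n-1)$ principal submatrices $\M_{J_i}$, transfer that null-space information to $\A$ and $\B$ via the PSD decomposition $\M_{J_i} = \A_{J_i}+\B_{J_i}$, and finally use the invertibility of $\M$ (which follows from $\det(\M) < 0$) to extract both the linear independence of $\{\ve^i\}_{i=1}^n$ and the non-vanishing of the diagonal of $\M$.

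First I would fix $i \in [n]$ and, using $\M_{J_i} \succeq \bz$ together with $\rk(\M_{J_i}) = n-2$, pick a nonzero $\w^i$ spanning the one-dimensional null space of $\M_{J_i}$; then define $\ve^i \in \R^n$ by $\ve^i_{J_i} \define \w^i$ and $\ve^i_i \define 0$, so that $\M_{J_i}\ve^i_{J_i} = \bz$ holds by construction. The key observation for the claim $\A_{J_i}\ve^i_{J_i} = \B_{J_i}\ve^i_{J_i} = \bz$ is that both $\A_{J_i}$ and $\B_{J_i}$ are PSD with sum $\M_{J_i}$, so
\begin{equation*}
0 \;=\; (\ve^i_{J_i})^\top \M_{J_i}\ve^i_{J_i} \;=\; (\ve^i_{J_i})^\top \A_{J_i}\ve^i_{J_i} + (\ve^i_{J_i})^\top \B_{J_i}\ve^i_{J_i},
\end{equation*}
and as both summands on the right are non-negative, each vanishes; the standard fact that $\x^\top Q \x = 0$ forces $Q\x = \bz$ for a PSD $Q$ then yields the two identities.

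Next I would verify linear independence. Since $\det(\M) < 0$, the matrix $\M$ is invertible. Expanding $\M\ve^i$ coordinatewise and using $\ve^i_i = 0$, for every $j \in J_i$ the $j$-th entry of $\M\ve^i$ equals $(\M_{J_i}\w^i)_j = 0$, so $\M\ve^i = c_i\, e_i$ where $c_i \define (\M\ve^i)_i$ and $e_i$ is the $i$-th standard basis vector. Because $\ve^i \neq \bz$ and $\M$ is invertible, $c_i \neq 0$; hence $\{\M\ve^i\}_{i=1}^n$ are nonzero scalar multiples of distinct standard basis vectors and are therefore linearly independent, which by invertibility of $\M$ transfers to $\{\ve^i\}_{i=1}^n$.

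Finally, to show that each diagonal entry $m_{ii}$ is nonzero, I would argue by contradiction. Since $n \geq 3$, every $2\times 2$ principal submatrix of $\M$ sits inside some $(n-1)\times(n-1)$ principal submatrix and is therefore PSD; if $m_{ii} = 0$, then applying this to $\begin{pmatrix} 0 & m_{ij} \\ m_{ij} & m_{jj}\end{pmatrix} \succeq \bz$ forces $m_{ij} = 0$ for every $j$, so row and column $i$ of $\M$ vanish and $\det(\M) = 0$, contradicting $\det(\M) < 0$. I expect the linear-independence step to be the main obstacle, but once one observes that $\M\ve^i$ is supported only at coordinate $i$, the whole argument collapses to invertibility of $\M$.
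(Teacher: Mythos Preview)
Your proof is correct, and the first and last parts---building $\ve^i$ from the null space of $\M_{J_i}$, transferring it to $\A_{J_i}$ and $\B_{J_i}$ via positive semidefiniteness, and ruling out zero diagonal entries through the $2\times 2$ minors---match the paper's argument essentially verbatim.

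The genuine difference lies in the linear-independence step. The paper argues by contradiction: it stacks the $\ve^i$ as rows of a matrix $\V$, supposes $\rk(\V) = r < n$, invokes Lemma~\ref{lem:6} to locate an index $k$ common to all $J_{i_j}$ for an independent set of rows, and then shows that the $k$-th column $\m^k$ of $\M$ lies in the null space of $\V$; this in turn forces $(\m^k_{J_k})^\top\adj(\M_{J_k})\m^k_{J_k} = 0$, which via Lemma~\ref{lem:4} gives $\det(\M) = 0$, a contradiction. Your route is more direct: you observe that $\M\ve^i$ is supported only at coordinate $i$, so $\M\ve^i = c_i e_i$ with $c_i \neq 0$ by invertibility of $\M$, whence $\{\M\ve^i\}_{i=1}^n$ and therefore $\{\ve^i\}_{i=1}^n$ are independent. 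This bypasses Lemmas~\ref{lem:4} and~\ref{lem:6} entirely and is both shorter and more transparent. The paper's approach does exhibit the explicit Schur-type identity $\det(\M) = -(\m^k_{J_k})^\top\adj(\M_{J_k})\m^k_{J_k}$, which is reused later, but for this proposition alone your argument is cleaner.
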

\begin{proof}
    From Lemma \ref{lem:5}, we have that $\adj(\M_{J_i}) =\w^i{\w^i}^{\top}$ where $J_i\define[n]\backslash\{i\}$ and $i\in[n]$. 
    \begin{align*}
        \M_{J_i}\adj(\M_{J_i}) &= \det(\M_{J_i})\mathbb{I}\\
        \M_{J_i}\w^i{\w^i}^\top &= \bz\\
       (\w^i)^{\top} \M_{J_i}\w^i &= 0\\
       \M_{J_i}\w^i &= \bz\,\,\,\,(\text{As $\M_{J_i}$ positive semidefinite.)}
    \end{align*}
    Let
    \begin{align*}
        \ve^i\define\begin{cases}
            \w_j^i& \text{  for } j \in J_i\\
            0 & \text{otherwise}
        \end{cases}
    \end{align*}
Observe that $\ve_{J_i}^i = \w^i$. As $\M = \A + \B$, we have that 
\begin{align*}
    \M_{J_i} &= \A_{J_i} + \B_{J_i}\\
    (\w^i)^{\top}\M_{J_i}\w^i &= (\w^i)^{\top}\A_{J_i}\w^i + (\w^i)^{\top}\B_{J_i}\w^i\\
     0 &=(\w^i)^{\top}\A_{J_i}\w^i + (\w^i)^{\top}\B_{J_i}\w^i\\
\end{align*}
As $\A_{J_i},\B_{J_i} \in \mathcal{S}_+^{n-1}$, we have that
\begin{align*}
    (\w^i)^{\top}\A_{J_i}\w^i &=  (\w^i)^{\top}\B_{J_i}\w^i = 0\\
    \A_{J_i}\w^i &=  \B_{J_i}\w^i= \bz
\end{align*}
We now show that $\{\ve^i\}_{i=1}^n$ is linearly independent. Assume to the contrary that $\{\ve^i\}_{i=1}^n$ are linearly dependent. Let 
\begin{align*}
    \V = \begin{pmatrix}
    ({\ve}^{1})^\top\\
    ({\ve}^{2})^\top\\
    \vdots\\
    ({\ve}^{n})^\top
    \end{pmatrix}.
\end{align*}
There exists a $\u\in\R^n$ such that $\V\u =\bz$, this implies that $\mathsf{rank}(\V) = r<n$. Let the independent rows of $\V$ be $i_1,i_2,\ldots,i_r$. As $r<n$, there exists a $k \in \displaystyle\bigcap_{j=1}^r J_{i_j}$. Let $\m^{k}$ be the $k^{th}$ column of the matrix $\M$. We know that $\adj(\M_{J_{i_j}}) \m^k_{J_{i_j}} = \bz$ since index $k \in J_{i_j} \forall j \,=\,1,2,\ldots,r$ and $\M_{J_{i_j}}$ is rank-deficient matrix. This implies that $ \m^{k} \in \text{nullspace}(\V)$ and hence $(\w^k)^\top\m^{k}_{J_k} = 0\implies \adj(\M_{J_k}) \m_{J_k}^{k} = \bz$.  

But from Lemma \ref{lem:4}, we have that $ \det(\M) = - (\m^k_{J_k})^{\top}\adj(\M_{J_k}) \m_{J_k}^{k}$ and $\det(\M)<0$ which contradicts the given assumption. Hence $\V$ is a full rank matrix.

Assume to the contrary that $\M$ has at least one diagonal element which is zero. Let this element be $m_{ii} = 0$. As $m_{ii}=0$ this implies that $m_{ik} = 0$ to see this 
\begin{align*}
m_{ii}m_{kk} -m_{ik}^2 &\geq 0\,\, \because\,\, \M\in \Sminors{n}{n-1}\\
m_{ik}^2 &= 0.
\end{align*}
This means that if $\M$ has a zero column, which is the $i^{th}$ column, then $\det(\M) = 0$ which contradicts the assumption that $\det(\M) <0$. \ignore{Hence proved.} 
\end{proof}
\begin{lem}\label{lem:11}
Let $\M \in \Sminors{n}{n-1}$ and $\det(\M)<0$. Let $G_{\M}(V,E)$ be a graph associated with the matrix $\M$ with \ignore{which is defined as follows} $V = [n]$ and $ij \in E$ if $m_{ij} \neq 0$. Then $G_{\M}(V,E)$ is a connected graph.
\end{lem}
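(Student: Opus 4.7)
The plan is to prove the contrapositive by contradiction: assume that $G_{\M}(V,E)$ is disconnected and derive $\det(\M)\geq 0$, contradicting the hypothesis. If $G_{\M}$ is disconnected, there is a nontrivial bipartition $[n] = A \sqcup B$ with $A,B \neq \emptyset$ such that no edge of $G_{\M}$ joins $A$ to $B$; that is, $m_{ij} = 0$ for every $i \in A$ and $j \in B$. After a symmetric permutation of rows and columns (which does not affect the determinant or membership in $\Sminors{n}{n-1}$), the matrix $\M$ becomes block diagonal with blocks $\M_{A}$ and $\M_{B}$.

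Next I would exploit the block decomposition to factor the determinant as
\begin{equation*}
\det(\M) \;=\; \det(\M_{A})\,\det(\M_{B}).
\end{equation*}
Since $|A|+|B|=n$ and both $A$ and $B$ are nonempty, we have $|A|,|B|\leq n-1$. Therefore each of $\M_{A}$ and $\M_{B}$ is a principal submatrix of some $(n-1)\times(n-1)$ principal submatrix of $\M$: for instance, picking any $j\in B$ and setting $I = [n]\setminus\{j\}$, the block $\M_{A}$ sits inside the $(n-1)\times(n-1)$ principal submatrix $\M_{I}$, and symmetrically for $\M_{B}$. By hypothesis $\M \in \Sminors{n}{n-1}$, so $\M_{I}\succeq \bz$, and because positive semidefiniteness passes to principal submatrices, $\M_{A}\succeq \bz$ and $\M_{B}\succeq \bz$.

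Consequently $\det(\M_{A})\geq 0$ and $\det(\M_{B})\geq 0$, which forces $\det(\M)\geq 0$, contradicting the hypothesis $\det(\M)<0$. Hence $G_{\M}(V,E)$ must be connected. The argument is short, and no step looks like a genuine obstacle; the only thing to be careful about is the case $|A|=n-1$ or $|B|=n-1$, which is handled in exactly the same way, since then the relevant block is itself one of the $(n-1)\times(n-1)$ principal submatrices guaranteed to be PSD by membership in $\Sminors{n}{n-1}$.
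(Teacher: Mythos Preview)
Your argument is correct and follows essentially the same approach as the paper: assume $G_{\M}$ is disconnected, obtain a block-diagonal structure for $\M$, observe that each block has size at most $n-1$ and is therefore positive semidefinite by membership in $\Sminors{n}{n-1}$, and conclude $\det(\M)\geq 0$. The only cosmetic difference is that the paper decomposes into all connected components and uses the quadratic form $\x^\top\M\x=\sum_i \x_{V_i}^\top \M_{V_i}\x_{V_i}\geq 0$ to deduce $\M\succeq \bz$ (hence $\det(\M)\geq 0$), whereas you use a two-block partition and the determinant factorization $\det(\M)=\det(\M_A)\det(\M_B)$ directly; both routes are equally valid.
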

\begin{proof}
Assume to the contrary that $G_{\M}(V,E)$ is disconnected. Then $V = \bigcup_{i=1}^k V_i$  where the induced subgraph on \(V_i\) is connected and $1\leq |V_i| \leq n-1$ and \(V_i\cap V_j = \emptyset\) for \(i\neq j\). 
We can partition the matrix $\M$ into the following form,
\begin{align*}
    \M =  \begin{pmatrix}
    \M_{V_1} & \mathbf{0} & \ldots &\mathbf{0}\\
    \mathbf{0} & \M_{V_2} & \ldots & \mathbf{0} \\
    \ldots & \ldots & \ldots & \ldots \\
    \mathbf{0} & \mathbf{0} & \mathbf{0} & \M_{V_k}
    \end{pmatrix}.
\end{align*}
Each $\M_{V_k}$ is a positive semidefinite since $\M_{V_k}$ is a principal sub-matrix of size less than or equal to $n-1$. For any $\x\in \mathbb{R}^n$,\\
\begin{align*}
    \x^\top\M\x &= \Sigma_{i=1}^k \x^\top_{V_{i}}\M_{V_{i}}\x_{V_{i}}\geq 0.
    \ignore{\x^\top\M\x &\geq 0.}
\end{align*}
From this we can conclude that $\det(\M)\geq 0$. But this violates the given condition that $\det(\M) < 0$. Hence $G_{\M}(V,E)$ is a connected graph.
\end{proof}
\subsubsection{Proof of Theorem \ref{prop:extrays3}}\label{proof_thm5.3}
\begin{proof}
Let\ignore{'s assume to the contrary that $\M$ does not span an extreme ray then one can write} $\M =  \A + \B$ where $\A,\B\in\Sminors{n}{n-1}$. We will show that $\A,\B$ are positive scalar multiples of $\M$.
\ignore{Now, it follows from Corollary \ref{cor:9} and Lemma \ref{lem:10} that $(n-1)\times (n-1)$ principal sub-matrices of both $\A$ and $\B$  are of rank $n-2$ and $\det(\A),~\det(\B)<0$.}
Let $J_i = [n]\sm\{i\}$. From Proposition \ref{prop:9}, there exists linearly independent vectors $\{\ve^i\}_{i=1}^n$ such that 
\begin{equation*}
    \M_{J_i}\ve_{J_i}^i = \A_{J_i}\ve_{J_i}^i = \B_{J_i}\ve_{J_i}^i = \bz.
\end{equation*}

Now, $S \define \Bigl\{\X \in \mathcal{S}^n:\X_{J_i} \ve_{J_i}^i = \mathbf{0} \,\,\,\forall\,\,\, J_i\in \mathcal{J}\Bigl\}$. $S$ is a subspace of the vector sp space of symmetric matrices, $\mathcal{S}^n$. The number of equations that define $S$ are $n(n-1)$ and $\X$ has a total of $\dfrac{n(n-1)}{2}$ variables.
Hence $\mathsf{dim}(S)\leq \dfrac{n(n-1)}{2}$. Observe that $\M,\A,\B \in S$.\\

Let $i\in[n]$ and $\x^i$ be a column of $\X$, then we claim that,
\begin{align}\label{prop:extrays3:eq1}
    (\ve^j)^{\top}\x^i =\bz \,\,\,\,\,\forall j\in J_i.
\end{align}

To see this,
    \begin{align*}
        \mathbf{v}^{j}_{J_j} \mathbf{x}^{(i)}_{J_j}  = 0,\\
        \sum_{k\in J_j} v_k^jx_k^i = 0
\end{align*}

As $v_j^j= 0 $ from Proposition \ref{prop:9}, we have
\begin{align*}
        \sum_{k\in J_j} v_k^jx_k^i + v_j^jx_j^i = 0 = (\mathbf{v}^{j})^{\top}\x^i
    \end{align*}
and \eqref{prop:extrays3:eq1} holds.
\ignore{The last but on equality follows from the definition of $ \ve^{j} $ [Proposition \ref{prop:9}] where $ v_j^j= 0 $.}
    Let $\V$ be the matrix defined in \ref{prop:9} and the rows of the matrix are $ \{\ve^i\}_{i=1}^n $. Let $\mathbf{V}(J_i)$ be the rows of the matrix $\V$ indexed by $J_i$.
Let $\mathbf{x}^{i}$ be the $i^{th}$ row of the matrix of $\X$. It follows from \eqref{prop:extrays3:eq1}, that 
\begin{align*}
    \V(J_i)\x^i = 0.
\end{align*}
\ignore{From Proposition \ref{prop:9}, we know that $\V$ is a full rank and hence $\V(J_i)$ is a $(n-1) \times n$ full-rank matrix. But $\V(J_i)$ has $n-1$ rows and $n$ columns.} It follows from Proposition \ref{prop:9} that $\V(J_i)$ is an $(n-1) \times n$ full-rank matrix.
Select all the independent columns of $V(J_i)$ and the $ (n-1)\times(n-1)$ matrix formed by these columns is an invertible square matrix. \ignore{The leftover column let it be the $k^{th}$ column.} Denote by $k$, the column which is not part of the above invertible $(n-1)\times(n-1)$ matrix. Hence, we can express every $x_{ij}$ in terms of $x_{ik}$.

Every $x_{ij}$ of the $\X$ is scalar multiple $x_{ik}$. 
The diagonal elements of the matrix $ \X $ are non-zero, because if at least one of them is zero, let this element be the $ x_{ll} $ where $ l\in [n] $, then we would have that $ m_{ll} = 0$  this impossible from Proposition \ref{prop:9}. Therefore,
\begin{align*}
    x_{qq} &\not =  0\,\,\forall q\in[n],\\
    x_{ii} &= c_{ik-ii}x_{ik},
\end{align*}
which implies that $c_{ik-ii} \neq 0$. This further suggests that every $x_{ij}$  is scalar multiple of $x_{ii}$. Similarly, one can obtain a relation between $x_{kk}$ and $x_{ik}$. Hence,
\begin{align*}
    x_{kk} &= c_{ik-kk} x_{ik}\\
     &= \dfrac{c_{ik-kk}}{c_{ik-ii}}x_{ii}\\
     &= C_{ii-kk}x_{ii},
\end{align*}
where $ C_{ii-kk} = \dfrac{c_{ik-kk}}{c_{ik-kk}}$.

From the elimination of variables process let $P$ be the set of indices such that $x_{ij} = 0$ regardless of the matrix in $S$. Let $G_{\X}(V,E)$ \ignore{where $V = [n]$ and $ij \in E$ if $ij \not \in P$} be the graph of $\X$ as defined in Lemma \ref{lem:11}. From Lemma $\ref{lem:11}$ we know that $G_{\M}(V,E)$\ignore{, where $V = [n]$ and $ij \in E$ if $m_{ij} \neq 0$,} is a connected graph \ignore{$G_{\M}$} and is a subgraph of $G_{\X}$. \ignore{Hence} Therefore $G_{\X}$ is a connected graph.

For each edge $pq$ of the graph $G_{\X}$ one can establish a relation between $p$ and $q$. As $G_{\X}$ is connected, one can find a path between any two nodes in the graph. For example, there is a path from $i$ to $i'$ let this path be $i - p_1 - \ldots - p_l - i'$. It follows that,
\begin{align*}
    x_{p_1p_1} &= C_{ii-{p_1p_1}}x_{ii}\\
    x_{p_2p_2} &= C_{{p_1p_1}-{p_2p_2}}x_{p_1p_1}\\
    &\vdots\\
     x_{i'i'} &= C_{{p_lp_l}-{i'i'}}x_{p_lp_l}.
\end{align*}
This implies $x_{i'i'}$ is a scalar multiple of $x_{ii}$. The constants $c's$ and $ C's $ are obtained from the sub-matrix inverse of $\V$ and $\V$ is obtained from $\M$ and do not depend on $\X$. \ignore{Hence $c$'s and $ C $'s are constants regardless of $\X$.}
Hence the rank of the system $S$ is 1 and every matrix in $S$ can be written as $a\mathbf{M}'$ where $a\in \R$. Since $\A,\B,\M \in \mathcal{S}_+^{n,n-1}$, we can write $\A = a_{\A}\mathbf{M}'$, $\B = a_{\B}\mathbf{M}'$, and $\C = a_{\C}\mathbf{M}'$ where $a_{\A},a_{\B},a_{\M}\in \R$. Therefore $\A = \dfrac{a_{\A}}{a_{\M}} \M$ and $\B = \dfrac{a_\B}{a_{\M}} \M$ . Hence $\M$ spans an extreme ray of $\Sminors{n}{n-1}$. 
\end{proof}

\begin{cor}\label{cor:12}
        For $n\geq 3$,\ignore{define} \(G(n,n-1) = \dfrac{n-1}{n-2} \mathbb{I}_{n} - \dfrac{1}{n-   2}\mathbf{1}_{n}\mathbf{1}_{n}^\top\)\ignore{.Then $G(n,n-1)$} spans an extreme ray of \(\Sminors{n}{n-1}\)
\end{cor}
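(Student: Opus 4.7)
The plan is to apply Theorem \ref{prop:extrays3} directly to $G \define G(n,n-1) = \frac{n-1}{n-2}\mathbb{I}_n - \frac{1}{n-2}\mathbf{1}_n\mathbf{1}_n^\top$. I would first observe that the diagonal entries of $G$ equal $1$ and its off-diagonal entries equal $-\frac{1}{n-2}$, so the entries depend only on whether two indices coincide. Consequently every $(n-1)\times(n-1)$ principal submatrix of $G$ is the same matrix, namely $G' \define \frac{n-1}{n-2}\mathbb{I}_{n-1} - \frac{1}{n-2}\mathbf{1}_{n-1}\mathbf{1}_{n-1}^\top$, so it suffices to analyze a single principal submatrix together with $G$ itself.

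The key computation is to diagonalize $G$ and $G'$ using the standard fact that a matrix of the form $\alpha\mathbb{I}_m + \beta\mathbf{1}_m\mathbf{1}_m^\top$ has eigenvalue $\alpha + m\beta$ on $\mathrm{span}(\mathbf{1}_m)$ and eigenvalue $\alpha$ with multiplicity $m-1$ on its orthogonal complement. Applied to $G'$ with $m = n-1$, $\alpha = \frac{n-1}{n-2}$, $\beta = -\frac{1}{n-2}$, this yields eigenvalues $0$ (simple) and $\frac{n-1}{n-2}$ (multiplicity $n-2$); hence $G' \succeq \mathbf{0}$ and $\mathsf{rank}(G') = n-2$, which in particular shows $G \in \Sminors{n}{n-1}$. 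Applied to $G$ with $m = n$, it gives eigenvalues $-\frac{1}{n-2}$ (simple) and $\frac{n-1}{n-2}$ (multiplicity $n-1$), so $\det(G) = -\frac{1}{n-2}\bigl(\frac{n-1}{n-2}\bigr)^{n-1} < 0$ for every $n \geq 3$.

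With these facts in hand, the hypotheses of Theorem \ref{prop:extrays3}(b) are satisfied: all $(n-1)\times(n-1)$ principal submatrices of $G$ are positive semidefinite of rank $n-2$, and $\det(G) < 0$. (For $n = 3$ one may equivalently invoke case (a), since then $n - 2 = 1$.) Therefore $G(n,n-1)$ spans an extreme ray of $\Sminors{n}{n-1}$. The argument is entirely mechanical once the spectrum of the ``identity plus rank-one'' form is recognized; I do not foresee a substantive obstacle beyond matching the two disjoint hypotheses of the theorem.
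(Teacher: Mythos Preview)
Your proposal is correct and mirrors the paper's own proof essentially line for line: both observe that every $(n-1)\times(n-1)$ principal submatrix equals $\frac{n-1}{n-2}\mathbb{I}_{n-1}-\frac{1}{n-2}\mathbf{1}_{n-1}\mathbf{1}_{n-1}^\top$, compute the spectra of $G$ and of this submatrix via the ``identity plus rank-one'' form, and then invoke Theorem~\ref{prop:extrays3}. The only extras in your write-up are the explicit statement of the $\alpha\mathbb{I}_m+\beta\mathbf{1}_m\mathbf{1}_m^\top$ eigenvalue fact and the aside about $n=3$, neither of which departs from the paper's approach.
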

\begin{proof}
    It suffices to prove that \(\det(G(n,n-1)) < 0\) and all \((n-1)\times (n-1)\) principal sub-matrices of $G(n,n-1)$ are positive semidefinite and have rank \(n-2\) (Theorem \ref{prop:extrays3}).

    Observe that the eigenvalues of $G(n,n-1)$ are $\frac{-1}{n-2}$ with multiplicity 1, and $\frac{n-1}{n-2}$ with multiplicity $n-1$. We have $\det(G(n,n-1)) = -\frac{1}{n-2}\left(\frac{n-1}{n-2}\right)^{n-1} < 0.$ Furthermore, every $(n-1)\times (n-1)$ principle sub-matrix of $G(n,n-1)$ can be expressed as \(\frac{n-1}{n-2}\mathbb{I}_{n-1} - \frac{1}{n-2}\mathbf{1}_{n-1}\mathbf{1}^{\top}_{n-1} \). The eigenvalues of every  $(n-1)\times (n-1)$ principle sub-matrix of $G(n,n-1)$ are thus $0$ with multiplicity 1, and $\frac{n-1}{n-2}$ with multiplicity $(n-2)$. Consequently, every $(n-1)\times (n-1)$ principle sub-matrix of $G(n,n-1)$ is positive semidefinite with rank $(n-2)$.
\ignore{As discussed earlier, the corollary follows from Theorem \ref{prop:extrays3}. However, to further illustrate the proof technique of Theorem \ref{prop:extrays3}, in the following, we utilize the same tools to prove that $G(n,n-1)$ spans an extreme ray of $\Sminors{n}{n-1}$.
    
    Assume to the contrary, \(G(n,n-1)\) does not span an extreme ray of \(\mathbb{S}_+^{n,n-1}\)
    and \(G(n,n-1) = \A + \B\) where \(\A,\B \in \mathcal{S}_+^{n,n-1}\). Let \(J \subset [n]\) and \(|J| = n-1\). We have,
\begin{align*}
	    G(n,n-1)_J &= \A_J + \B_J
     \end{align*}
     Multiplying on left and right by $\mathbf{1}_{n-1}$,
\begin{align*}    
	    0=\mathbf{1}_{n-1}^{\top}\bigg(\frac{n-1}{n-2}\mathbb{I}_{n-1} - \frac{1}{n-1}\mathbf{1}_{n-1}\mathbf{1}_{n-1}^{\top}\bigg)\mathbf{1}_{n-1} &= \mathbf{1}_{n-1}^{\top}\A_J\mathbf{1}_{n-1} + \mathbf{1}_{n-1}^{\top}B_J\mathbf{1}_{n-1}
	   \ignore{0 &= \mathbf{1}_{n-1}^{\top}\A_J\mathbf{1}_{n-1} + \mathbf{1}_{n-1}^{\top}\B_J\mathbf{1}_{n-1}.}
\end{align*}
It follows that $\mathbf{1}_{n-1}^{\top}\A_J\mathbf{1}_{n-1} = 0$ and $\mathbf{1}_{n-1}^{\top}\B_J\mathbf{1}_{n-1} = 0$. Alternatively, $\A_J\mathbf{1}_{n-1} = \mathbf{0}$ and $\B_J\mathbf{1}_{n-1} = \mathbf{0}$.

Let \(J_i = [n]\sm[i]\). Select all the equations indexed by index \(i\) in \(\A_{J_{j}}\mathbf{1} = 0\) where \(j\in J_i\).
The equations are of the form,
\begin{align*}
    \Sigma_{k\in J_j}a_{ik} = 0\,\, \forall j \in J_i. 
\end{align*}
Let \(l_1,l_2\in J_i\) then \(|J_{l_1} \cap J_{l_2}| = n-2\).
The above equations can be rewritten as,
\begin{align*}
    a_{ii} + a_{il_2} + \Sigma_{k\in J_{l_1}k\neq i,l_2} a_{ik} = 0,\\
    a_{ii} + a_{il_1} + \Sigma_{k\in J_{i_2}k\neq i,l_1} a_{ik} = 0,\\
    a_{il_2} = a_{il_1}.
\end{align*}
This implies \(a_{ik} = a_{il_1}\forall k\in J_{i}\). Therefore \ignore{we have that},
\begin{align*}
    a_{ii} + \Sigma_{k\in J_{l_2},k\neq i} a_{ik} &= 0,\\
    a_{ik} &= a_{il_1} \,\, \forall k\in J_i,\\
    a_{ii} + (n-1) a_{il_1} &= 0.
\end{align*}
Simplifying, we obtain
\begin{align*}
    a_{il_1} &= \frac{-1}{n-1}a_{ii}, \text{ and }\\
    a_{ik} &= \frac{-1}{n-1}a_{ii} \forall k \in J_i.
\end{align*}

Select all the equations indexed by \(k\) in \(A_{J_j}\mathbf{1} = 0\) where \(j\in J_k\).
Repeating the above procedure \(a_{kl} = \frac{-1}{n-1}a_{kk}\) for all \(l \in J_{k}\). Hence
\begin{align*}
    a_{ik} &= a_{ki}\\
    a_{ik} &= \frac{-1}{n-1}a_{ii}\\
    a_{ki} &= \frac{-1}{n-1}a_{kk}\\
    a_{ii} &= a_{kk} ~\forall i\neq k.
\end{align*}
This implies that \(\A = a_{11}G(n,n-1)\) and \(B = b_{11}G(n,n-1)\).
But this is a contradiction. Hence \(G(n,n-1)\) spans an extreme ray of \(\Sminors{n}{n-1}\).}
\end{proof}

\Gnkextrays*
\begin{proof}
     Assume that \(G(n,k)\) does not span an extreme ray of \(\Sminors{n}{k}\). Then \(G(n,k) = \A + \B\) where \(\A,\B \in \Sminors{n}{k}\).
    Let \(J\in[n]\) and \(|J| = k+1\). It follows,
    \begin{equation*}
        (G(n,k))_J = G(k+1,k)
    \end{equation*}
    \begin{align*}
        G(n,k)_J &= \A_J + \B_J\\
        G(k+1,k) &= \A_J + \B_J\\
        j &= \min(J)\\
        \A_J &= a_{jj}G(k+1,k)\\
        \B_J &= b_{jj}G(k+1,k).
    \end{align*}
    This implies \(\A = a_{11} G(n,k)\) and \(\B = b_{11}G(n,k)\),\ignore{This is a contradiction. It follows that}and \(G(n,k)\) spans an extreme ray of \(\Sminors{n}{k}\).
\end{proof}
\NLSextremerays*
\begin{proof}

We know that $\M$ is diagonally congruent to $G(n,k)$ \cite{blekherman2022hyperbolic}. Assume to the contrary that $\M$ doesn't span an extreme ray then we have that $\M = \M_1 + \M_2$ where $\M_1,\M_2\in \calS^{n,k}_+$ and $\M_1 \neq \M_2$.
    Since $\M$ is diagonally congruent to $G(n,k)$ we have that there exists an invertible diagonal matrix such that $\M = \mathbf{D}G(n,k)\mathbf{D}$. It follows,
    \begin{align*}
        \M &= \M_1 + \M_2\\
        \mathbf{D}^{-1}\M\mathbf{D}^{-1} &= \mathbf{D}^{-1}\M_1\mathbf{D}^{-1} + \mathbf{D}^{-1}\M_2\mathbf{D}^{-1}.
    \end{align*}
    From Sylvester's law of inertia and as $\mathbf{D}^{-1}$ is a diagonal matrix we have that $\mathbf{D}^{-1}\M_1\mathbf{D}^{-1},\mathbf{D}^{-1}\M_2\mathbf{D}^{-1} \in \calS^{n,k}_+$. This contradicts corollary \ref{cor:Gnkextrays}. Hence $\M$ spans an extreme ray of $\Sminors{n}{k}$.



\end{proof}
\bibliographystyle{unsrtnat}  
\bibliography{references}

\begin{thebibliography}{18}
\providecommand{\natexlab}[1]{#1}
\providecommand{\url}[1]{\texttt{#1}}
\expandafter\ifx\csname urlstyle\endcsname\relax
  \providecommand{\doi}[1]{doi: #1}\else
  \providecommand{\doi}{doi: \begingroup \urlstyle{rm}\Url}\fi

\bibitem[Goemans and Williamson(1994)]{goemans1994879}
Michel~X Goemans and David~P Williamson.
\newblock . 879-approximation algorithms for max cut and max 2sat.
\newblock In \emph{Proceedings of the twenty-sixth annual ACM symposium on Theory of computing}, pages 422--431, 1994.

\bibitem[Oustry et~al.(2022)Oustry, D’Ambrosio, Liberti, and Ruiz]{oustry2022certified}
Antoine Oustry, Claudia D’Ambrosio, Leo Liberti, and Manuel Ruiz.
\newblock Certified and accurate sdp bounds for the acopf problem.
\newblock \emph{Electric Power Systems Research}, 212:\penalty0 108278, 2022.

\bibitem[Sojoudi and Lavaei(2014)]{sojoudi2014exactness}
Somayeh Sojoudi and Javad Lavaei.
\newblock Exactness of semidefinite relaxations for nonlinear optimization problems with underlying graph structure.
\newblock \emph{SIAM Journal on Optimization}, 24\penalty0 (4):\penalty0 1746--1778, 2014.

\bibitem[Dey et~al.(2022)Dey, Kazachkov, Lodi, and Munoz]{dey2022cutting}
Santanu~S Dey, Aleksandr Kazachkov, Andrea Lodi, and Gonzalo Munoz.
\newblock Cutting plane generation through sparse principal component analysis.
\newblock \emph{SIAM Journal on Optimization}, 32\penalty0 (2):\penalty0 1319--1343, 2022.

\bibitem[Kim and Kojima(2001)]{kimkojima2001}
Sunyonga Kim and Masakazu Kojima.
\newblock Second order cone programming relaxation of nonconvex quadratic optimization problems.
\newblock \emph{Optimization Methods and Software}, 15\penalty0 (3-4):\penalty0 201--224, 2001.

\bibitem[Kocuk et~al.(2016)Kocuk, Dey, and Sun]{kocuk2016strong}
Burak Kocuk, Santanu~S Dey, and X~Andy Sun.
\newblock Strong socp relaxations for the optimal power flow problem.
\newblock \emph{Operations Research}, 64\penalty0 (6):\penalty0 1177--1196, 2016.

\bibitem[Blekherman et~al.(2022{\natexlab{a}})Blekherman, Dey, Molinaro, and Sun]{blekherman2022sparse}
Grigoriy Blekherman, Santanu~S Dey, Marco Molinaro, and Shengding Sun.
\newblock Sparse psd approximation of the psd cone.
\newblock \emph{Mathematical Programming}, 191\penalty0 (2):\penalty0 981--1004, 2022{\natexlab{a}}.

\bibitem[Blekherman et~al.(2022{\natexlab{b}})Blekherman, Dey, Shu, and Sun]{blekherman2022hyperbolic}
Grigoriy Blekherman, Santanu~S Dey, Kevin Shu, and Shengding Sun.
\newblock Hyperbolic relaxation of k-locally positive semidefinite matrices.
\newblock \emph{SIAM Journal on Optimization}, 32\penalty0 (2):\penalty0 470--490, 2022{\natexlab{b}}.

\bibitem[Kozhasov(2023)]{kozhasov2023eigenvalues}
Khazhgali Kozhasov.
\newblock On eigenvalues of symmetric matrices with psd principal submatrices.
\newblock \emph{Journal of Symbolic Computation}, 119:\penalty0 90--100, 2023.

\bibitem[Fawzi(2019)]{fawzi2019representing}
Hamza Fawzi.
\newblock On representing the positive semidefinite cone using the second-order cone.
\newblock \emph{Mathematical Programming}, 175\penalty0 (1-2):\penalty0 109--118, 2019.

\bibitem[Fawzi(2021)]{fawzi2021polyhedral}
Hamza Fawzi.
\newblock On polyhedral approximations of the positive semidefinite cone.
\newblock \emph{Mathematics of Operations Research}, 46\penalty0 (4):\penalty0 1479--1489, 2021.

\bibitem[Song and Parrilo(2023)]{song2023approximations}
Dogyoon Song and Pablo~A Parrilo.
\newblock On approximations of the psd cone by a polynomial number of smaller-sized psd cones.
\newblock \emph{Mathematical Programming}, 198\penalty0 (1):\penalty0 733--785, 2023.

\bibitem[Grone et~al.(1984)Grone, Johnson, S{\'a}, and Wolkowicz]{grone1984positive}
Robert Grone, Charles~R Johnson, Eduardo~M S{\'a}, and Henry Wolkowicz.
\newblock Positive definite completions of partial hermitian matrices.
\newblock \emph{Linear algebra and its applications}, 58:\penalty0 109--124, 1984.

\bibitem[Fukuda et~al.(2001)Fukuda, Kojima, Murota, and Nakata]{fukuda2001exploiting}
Mituhiro Fukuda, Masakazu Kojima, Kazuo Murota, and Kazuhide Nakata.
\newblock Exploiting sparsity in semidefinite programming via matrix completion i: General framework.
\newblock \emph{SIAM Journal on optimization}, 11\penalty0 (3):\penalty0 647--674, 2001.

\bibitem[Vandenberghe et~al.(2015)Vandenberghe, Andersen, et~al.]{vandenberghe2015chordal}
Lieven Vandenberghe, Martin~S Andersen, et~al.
\newblock Chordal graphs and semidefinite optimization.
\newblock \emph{Foundations and Trends{\textsuperscript{\textregistered}} in Optimization}, 1\penalty0 (4):\penalty0 241--433, 2015.

\bibitem[Gouveia et~al.(2022)Gouveia, Kova{\v{c}}ec, and Saee]{gouveia2022sums}
Joao Gouveia, Alexander Kova{\v{c}}ec, and Mina Saee.
\newblock On sums of squares of k-nomials.
\newblock \emph{Journal of Pure and Applied Algebra}, 226\penalty0 (1):\penalty0 106820, 2022.

\bibitem[Harville(1998)]{harville1998matrix}
David~A Harville.
\newblock Matrix algebra from a statistician's perspective, 1998.

\bibitem[Horn and Johnson(2012)]{horn2012matrix}
Roger~A Horn and Charles~R Johnson.
\newblock \emph{Matrix analysis}.
\newblock Cambridge university press, 2012.

\end{thebibliography}

\end{document}